\theoremstyle{plain}
\newtheorem{theorem}{Theorem}[section]
\newtheorem{corollary}[theorem]{Corollary}
\newtheorem{lemma}[theorem]{Lemma}
\newtheorem*{theorem*}{Theorem}
\newtheorem*{lemma*}{Lemma}
\newtheorem*{proposition*}{Proposition}
\newtheorem*{corollary*}{Corollary}
\theoremstyle{definition}
\newtheorem*{definition*}{Definition}
\newtheorem*{example*}{Example}
\newtheorem*{remark*}{Remark}
\newcommand{\LLL}{\mathscr{L}}
\newcommand{\I}{\mathrm{I}}
\newcommand{\II}{\mathrm{II}}
\newcommand{\III}{\mathrm{III}}
\DeclareMathOperator{\HH}{Ht}
\DeclareMathOperator{\dd}{d}
\title[Diophantine approximation on circles and spheres]{Intrinsic Diophantine approximation on circles and spheres}
\author{Byungchul Cha}
\address{Muhlenberg College, 2400 Chew st, Allentown, PA, 18104, USA}
\email{cha@muhlenberg.edu}
\author{Dong Han Kim}
\address{Department of Mathematics Education, Dongguk University - Seoul, 30 Pildong-ro 1-gil, Jung-gu, Seoul, 04620 Korea}
\email{kim2010@dgu.ac.kr}
\thanks{Research supported by the National Research Foundation of Korea (NRF-2018R1A2B6001624).}
\subjclass[2010]{11J06, 11J17}
\keywords{Lagrange spectrum, Diophantine approximation on spheres, Diophantine approximation on complex numbers}
\begin{document}
\begin{abstract}
We study Lagrange spectra arising from intrinsic Diophantine approximation of circles and spheres. 
More precisely, we consider three circles embedded in $\mathbb{R}^2$ or $\mathbb{R}^3$ and three spheres embedded in $\mathbb{R}^3$ or $\mathbb{R}^4$. 
We present a unified framework to connect the Lagrange spectra of these six spaces with the spectra of $\mathbb{R}$ and $\mathbb{C}$. 
Thanks to prior work of Asmus L.~Schmidt on the spectra of $\mathbb{R}$ and $\mathbb{C}$, we obtain as a corollary, for each of the six spectra, the smallest accumulation point and the initial discrete part leading up to it completely.
\end{abstract}

\maketitle


\section{Introduction}\label{sec:introduction}
In the classical Diophantine approximation, we study approximations of irrational numbers by rational numbers. 
For any irrational number $\alpha$,  there exists an $L \ge 1$ such that the inequality 
\[
\left| \alpha - \frac pq \right| < \frac{1}{L q^2}
\]
admits infinitely many integer solutions $p$ and $q$ that are relatively prime. 
The \emph{Lagrange number} $L(\alpha)$ of $\alpha$ is defined to be the supremum of such $L$'s.
Equivalently, we have  
\[
L(\alpha) =\limsup_{p/q \in \mathbb Q} \left( q^2 \left| \alpha - \frac pq \right| \right)^{-1}.
\]
The \emph{Lagrange spectrum} $\LLL$ is the set of Lagrange numbers (that are finite) of all irrational numbers.
Many properties of $\LLL$ are known. 
For example, $\sqrt 5$ is the smallest value of $\LLL$, called the \emph{Hurwitz constant}, and $3$ is the smallest limit point of $\LLL$ in $\mathbb{R}$.
See \cite{Aig13}, \cite{Bom07} and \cite{CF89} for details.

On the other hand, structures of Lagrange spectra arising from \emph{intrinsic} Diophantine approximation are less known. 
After introducing a few notations, 
we briefly review some existing results on intrinsic Diophantine approximation, focusing on the case of unit spheres $S^n$ in $\mathbb{R}^{n+1}$. 
Let $(\mathcal{X}, \dd)$ be a complete metric space.
Assume that 
$\mathcal{Z}$ is a countable dense subset of $\mathcal{X}$.
In addition, we assume that there is a \emph{height function} $\HH_{\mathcal{Z}}: \mathcal{Z} \longrightarrow \mathbb{R}_{\ge0}$.
Given the data
$(\mathcal{X}, \dd,
\mathcal{Z}, \HH_{\mathcal{Z}})$,
we define \emph{the Lagrange number} $L_{(\mathcal{X}, \dd, \mathcal{Z}, \HH_{\mathcal{Z}})}(P)$ \emph{of} $P\in \mathcal{X} \setminus \mathcal Z$ to be
\[
L_{(\mathcal{X}, \dd, \mathcal{Z}, \HH_{\mathcal{Z}})}(P) 
= \limsup_{Z\in \mathcal{Z}}\frac1{\HH_{\mathcal{Z}}(Z) \dd(P, Z)}.
\]
Also, we define \emph{the Lagrange spectrum} to be
\[
\LLL(\mathcal{X}, \dd, \mathcal{Z}, \HH_{\mathcal{Z}}) = 
\{ L_{(\mathcal{X}, \dd, \mathcal{Z}, \HH_{\mathcal{Z}})}(P) 
\mid P \in \mathcal{X} \setminus \mathcal Z, \quad 
L_{(\mathcal{X}, \dd, \mathcal{Z}, \HH_{\mathcal{Z}})}(P) > 0 \}.
\]
When the choices of $\dd$ and $\HH_{\mathcal{Z}}$ are clear from the context, 
we will write $L_{(\mathcal{X}, \mathcal{Z})}(P)$ 
instead of $L_{(\mathcal{X}, \dd, \mathcal{Z}, \HH_{\mathcal{Z}})}(P)$.
Furthermore, we may 
simply write
$L(P)$ 
whenever the data $(\mathcal{X}, \dd, \mathcal{Z}, \HH_{\mathcal{Z}})$ 
is implicitly understood
and thus there is no danger of confusion.
Similarly, we will write $\LLL(\mathcal{X}, \mathcal{Z})$ when the context makes clear the choices of $\dd$ and $\HH_{\mathcal{Z}}$.

Using these notations, if we let $\mathcal{X} =\mathbb{R}$, equipped with the usual Euclidean distance $\dd$, 
and $\mathcal{Z} = \mathbb{Q}$ with the height function on $\mathbb{Q}$ being $\HH_{\mathbb{Q}}(p/q) = |q|^2$,
the case $(\mathcal{X}, \mathcal{Z}) = (\mathbb{R}, \mathbb{Q})$
corresponds to
the classical Lagrange spectrum studied by Markoff in \cite{Mar79} and \cite{Mar80}.

Particularly relevant to the present article is the intrinsic Diophantine approximation of $n$-spheres
$(\mathcal{X}, \mathcal{Z}) = (S^n, S^n \cap \mathbb{Q}^{n+1})$.
Here, 
\[
S^n = \{ (x_1, x_2, \dots, x_{n+1} )\in \mathbb R^{n+1} \, | \, x_1^2 + x_2^2+ \dots + x_{n+1}^2 = 1\}
\]
is the unit $n$-sphere in $\mathbb{R}^{n+1}$.
And the distance $\dd$ on $S^n$ is the Euclidean distance inherited from the ambient space $\mathbb{R}^{n+1}$.
Also, we define the height function 
on $S^n \cap \mathbb{Q}^{n+1}$
to be
\begin{equation}
\HH_{S^n \cap \mathbb{Q}^{n+1}}(\mathbf{p}/q) =|q|
\label{eq:Height_in_Sn}
\end{equation}
with $\mathbf{p}\in\mathbb{Z}^{n+1}$ 
\emph{primitive}, 
meaning that all coefficients of $\mathbf{p}$ have no common divisor $>1$.  

With respect to this data
$(S^n, \dd, S^n \cap \mathbb{Q}^{n + 1},\HH_{S^n \cap \mathbb{Q}^{n + 1}})$,
it appears that
Kopetzky was the first person to determine the initial discrete part of the spectrum 
$\LLL(S^1, S^1\cap\mathbb{Q}^2)$
of the 1-sphere.
He obtained his results in \cite{Kop80} and \cite{Kop85} by relying on prior results \cite{Sch75a} and \cite{Sch75b} of Asmus L.~Schmidt. 
In \cite{CK23}, the authors of the present paper investigated $\LLL(S^1, S^1\cap\mathbb{Q}^2)$ independently of Kopetzky's results and determined the structure of the initial discrete part of 
$\LLL(S^1, S^1\cap\mathbb{Q}^2)$
more explicitly by using a different set of tools.

For $n \ge 2$, much less is known about 
$\LLL(S^n, S^n\cap\mathbb{Q}^{n+1})$.
Moshchevitin \cite{Mos16} found that the minimum of 
$\LLL(S^2, \dd, S^n\cap\mathbb{Q}^{3}, \HH_{S^2\cap\mathbb{Q}^{3}})$ 
is greater than or equal to $\frac 12 \sqrt{\frac{\pi}{3}}$.
In \cite{KM15}, Kleinbock and Merrill made an important contribution to the study of implicit Diophantine approximation of the $n$-sphere for general $n\ge 2$.
See also \cite{FKMS}.
We should note here that 
Kleinbock and Merrill use the sup norm $\dd_{\sup}(\mathbf{x} , \mathbf{y}) := \sup_{i=0}^n \lvert x_i - y_i\rvert$ on $S^n$, not the Euclidean distance $\dd$,
while they use the same height function as in \eqref{eq:Height_in_Sn}.
One of their results in \cite{KM15} says that, for each $n\ge 1$,
there exists $c_n > 0$ such that
\[
\LLL(S^n, \dd_{\sup}, S^n\cap\mathbb{Q}^{n+1}, \HH_{S^n\cap\mathbb{Q}^{n+1}})
\subseteq (c_n , \infty).
\]

When we combine \cite{Sch75a} with Corollary~\ref{cor:equality_lagrange_spectra_S2},
our result shows that the minimum of 
$\LLL(S^2, \dd, S^2\cap\mathbb{Q}^{3}, \HH_{S^2\cap\mathbb{Q}^{3}})$ 
is $ \sqrt{\frac{3}{2}}$,
which improves 
Moshchevitin's lower bound $\frac12\sqrt{\frac{\pi}{3}}$.
Note that the case $\mathcal{X} = \mathbf{S}_{\I}^2$ in Corollary~\ref{cor:equality_lagrange_spectra_S2} is the same 
as $(S^2, S^2 \cap \mathbb{Q}^3)$ above.
Additionally, it is easy to see that, for any $\mathbf{x}, \mathbf{y}\in S^n$, 
\[
\frac1{\sqrt{n + 1}} \dd(\mathbf{x}, \mathbf{y})
\le
\dd_{\sup}(\mathbf{x}, \mathbf{y})
\le
\dd(\mathbf{x}, \mathbf{y}).
\]
From this, we see that
\begin{multline}
L_{(S^n, \dd, S^n\cap\mathbb{Q}^{n+1}, \HH_{S^n\cap\mathbb{Q}^{n+1}})}(P) 
\\
\le
L_{(S^n, \dd_{\sup}, S^n\cap\mathbb{Q}^{n+1}, \HH_{S^n\cap\mathbb{Q}^{n+1}})} (P)
\\
\le 
\sqrt{n +  1} \cdot
L_{(S^n, \dd, S^n\cap\mathbb{Q}^{n+1}, \HH_{S^n\cap\mathbb{Q}^{n+1}})} (P)
\end{multline}
for $P \in S^n \setminus \mathbb{Q}^{n+1}$.
So we conclude 
\begin{equation}
\sqrt{
\frac32
}
\le
\inf
\LLL(S^2, \dd_{\sup}, S^2\cap\mathbb{Q}^{3}, \HH_{S^2\cap\mathbb{Q}^{3}})
\le 
\frac3{\sqrt2}.
\end{equation}

The plan for the paper is as follows.
We aim to study Lagrange spectra of certain spaces $\mathcal{X}$ listed in Table~\ref{tab:notations}, all of which are homeomorphic to $S^1$ or $S^2$.
Note that $\mathbf{S}^1_\I = S^1$ and $\mathbf{S}^2_\I = S^2$ in the above discussions.
We will construct maps from
either $\mathbb{R}$ or $\mathbb{C}$ 
to $\mathcal{X}$ 
via stereographic projections. 
To be precise, we introduce the six spaces $(\mathcal{X}, \mathcal{Z})$
that are listed in Table~\ref{tab:notations}.
\begin{table}
\[
\begin{array}{lc}
\toprule
\mathcal{X} & \mathcal{Z} \\
\midrule
\mathbf{S}^1_\I  = \{ (x_1, x_2) \in \mathbb{R}^2 \mid x_1^2 + x_2^2 = 1  \}
& \mathbf{S}^1_\I \cap \mathbb{Q}^2  \\
\mathbf{S}^1_\II  = \{ (x_1, x_2) \in \mathbb{R}^2 \mid x_1^2 + x_2^2 = 2\}
& \mathbf{S}^1_\II \cap \mathbb{Q}^2  \\
\mathbf{S}^1_\III  = \{ (x_0, x_1, x_2) \in W \mid x_0^2 + x_1^2 + x_2^2 = 1  \}
& \mathbf{S}^1_\III \cap \mathbb{Q}^3  \\
\quad \text{where} \quad W = \{ (x_0, x_1, x_2) \in \mathbb{R}^3 \mid x_0 + x_1 + x_2 = 1 \} &  \\
\midrule
\mathbf{S}^2_\I  = \{ (x_1, x_2, x_3) \in \mathbb{R}^3 \mid x_1^2 + x_2^2 + x_3^2 = 1  \}
& \mathbf{S}^2_\I \cap \mathbb{Q}^3  \\
\mathbf{S}^2_\II  = \{ (x_1, x_2, x_3) \in \mathbb{R}^3 \mid x_1^2 + x_2^2 + x_3^2 = 2\}
& \mathbf{S}^2_\II \cap \mathbb{Q}^3  \\
\mathbf{S}^2_\III  = \{ (x_0, x_1, x_2, x_3) \in W \mid x_0^2 + x_1^2 + x_2^2 + x_3^2 = 1  \}
& \mathbf{S}^2_\III \cap \mathbb{Q}^4  \\
\quad \text{where} \quad W = \{ (x_0, x_1, x_2, x_3) \in \mathbb{R}^4 \mid x_0 + x_1 + x_2 + x_3 = 1 \} &  \\
\bottomrule
\end{array}
\]
\caption{Definitions of the six spaces $(\mathcal{X}, \mathcal{Z})$. 
In all cases, the distance $\dd$ on $\mathcal{X}$ is the Euclidean distance in the ambient spaces $\mathbb{R}^l$ of $\mathcal{X}$. And $\HH_{\mathcal{Z}}$ is given by $\HH_{\mathcal{Z}}(\mathbf{p}/q) = |q|$ with primitive $\mathbf{p}\in \mathbb{Z}^l$.
}
\label{tab:notations}
\end{table}
In each of the six cases, the distance function $\dd$ on $\mathcal{X}$ will be the usual Euclidean distance inherited from its ambient Euclidean space $\mathbb{R}^{n+1}$ or, in the case of $\mathbf{S}_\III^n$, $\mathbb{R}^{n+2}$.
Also, we define the set $\mathcal{Z}$ of \emph{rational points} to be 
$\mathcal{Z} = \mathcal{X} \cap \mathbb{Q}^l$, where $l = n+1$ or $n+2$.
The height function on $\mathcal{Z}$ is defined in the same way as in \eqref{eq:Height_in_Sn}.
For each of the three cases $\mathcal{X} = 
\mathbf{S}^1_\I,
\mathbf{S}^1_\II,$ 
$\mathbf{S}^1_\III$,  we will construct 
a map $\Phi: \mathbb{R} \longrightarrow \mathcal{X}$.
When $\mathcal{X} = 
\mathbf{S}^2_\I,
\mathbf{S}^2_\II,$ or
$\mathbf{S}^2_\III$,  the map will be 
$\Phi: \mathbb{C} \longrightarrow \mathcal{X}$.
Then we present a common set of conditions 
($\Phi$-i) and ($\Phi$-ii)
(see Lemma~\ref{lem:main_lemma})
for all the above maps $\Phi$ to satisfy.
In \S\ref{sec:summary_of_heights},
we define the height functions on certain sets $K$ of rational points of $\mathbb{R}$ and of $\mathbb{C}$.
When the conditions 
($\Phi$-i) and ($\Phi$-ii)
are 
satisfied, the Lagrange numbers of $\xi$ and $\Phi(\xi)$ are the same, up to a fixed constant factor.
In \S\ref{Sec:one_dimension}, we give the definitions of $\Phi$ for each of the six cases and prove that 
($\Phi$-i) and ($\Phi$-ii)
are satisfied for these six maps.

We note here that $(\mathbb{C}, K)$, even when $K = \mathbb{Q}(\sqrt{-1})$, is not the same as $(\mathbb{R}^2, \mathbb{Q}^2)$ and therefore our results do not establish any relation between
the Lagrange spectrum of $S^2$ and that of $\mathbb{R}^2$. 
For this reason, the authors do not know if one can use a similar method to study $(S^n, S^n \cap \mathbb{Q}^{n+1})$ for $n\ge 3$.

Now, we state our main theorem for $S^1$, that is, for the cases $\mathcal{X} = 
\mathbf{S}^1_\I$, 
$\mathbf{S}^1_{\II}$,
$\mathbf{S}^1_{\III}$. 
\begin{theorem}\label{thm:new_main1}
Consider $(\mathbb{R}, K)$ where $K = \sqrt2\mathbb{Q}$ for the statements \textrm{(a)} and \textrm{(b)}
and $K = \mathbb{Q}$ for \textrm{(c)}.
The distance $\dd$ on $\mathbb{R}$ is the usual Euclidean distance and the definitions of height functions on $K$ are given in \S\ref{sec:summary_of_heights}.
\begin{enumerate}[font=\upshape, label=(\alph*)]
\item
There exists a continuous bijection 
$
\Phi^1_{\I}: \mathbb{R} \longrightarrow \mathbf{S}^1_{\I} \setminus \{ \mathbf{n} \}
$
for some $\mathbf{n} \in 
\mathbf{S}^1_{\I} \cap \mathbb{Q}^2$ 
such that 
$\Phi^1_{\I}$ maps $\sqrt2\mathbb{Q}$ onto 
$\mathbf{S}^1_{\I} \cap \mathbb{Q}^2 \setminus \{ \mathbf{n} \}$ 
and
\[
L_{(\mathbb{R}, \sqrt2\mathbb{Q})}(\xi) = \sqrt 2 L_{(\mathbf{S}^1_{\I}, \mathbf{S}^1_{\I}\cap \mathbb{Q}^2)}(\Phi (\xi))
\]
for every $\xi \in \mathbb{R} \setminus \sqrt2\mathbb{Q}$.
\item
There exists a continuous bijection
$
\Phi^1_{\II}: \mathbb{R} \longrightarrow \mathbf{S}^1_{\II} \setminus \{ \mathbf{n} \}
$
for some $\mathbf{n} \in 
\mathbf{S}^1_{\II} \cap \mathbb{Q}^2$ 
such that 
$\Phi^1_{\II}$ maps $\sqrt2\mathbb{Q}$ onto 
$\mathbf{S}^1_{\II} \cap \mathbb{Q}^2 \setminus \{ \mathbf{n} \}$ 
and
\[
L_{(\mathbb{R}, \sqrt2\mathbb{Q})}(\xi) = 2 L_{(\mathbf{S}^1_{\II}, \mathbf{S}^1_{\II}\cap \mathbb{Q}^2)}(\Phi (\xi))
\]
for every $\xi \in \mathbb{R} \setminus \sqrt2\mathbb{Q}$.
\item
There exists a continuous bijection
$
\Phi^1_{\III}: \mathbb{R} \longrightarrow \mathbf{S}^1_{\III} \setminus \{ \mathbf{n} \}
$
for some $\mathbf{n} \in 
\mathbf{S}^1_{\III} \cap \mathbb{Q}^3$ 
such that 
$\Phi^1_{\III}$ maps $\mathbb{Q}$ onto 
$\mathbf{S}^1_{\III} \cap \mathbb{Q}^3 \setminus \{ \mathbf{n} \}$ 
and
\[
L_{(\mathbb{R}, \mathbb{Q})}(\xi) = \sqrt2 L_{(\mathbf{S}^1_{\III}, \mathbf{S}^1_{\III}\cap \mathbb{Q}^3)}(\Phi (\xi))
\]
for every $\xi \in \mathbb{R} \setminus \mathbb{Q}$.
\end{enumerate}
\end{theorem}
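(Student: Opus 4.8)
The plan is to construct each map $\Phi^1_\bullet$ as the inverse of a stereographic projection from the relevant curve onto $\mathbb{R}$ (suitably rescaled so that the arithmetic of rational points matches), and then verify the two conditions ($\Phi$-i) and ($\Phi$-ii) of Lemma~\ref{lem:main_lemma}, whence the stated equalities of Lagrange numbers follow formally. Concretely, for $\mathbf{S}^1_\I$ (the unit circle) the relevant map is essentially the classical $\sigma$ from the introduction, but with the domain variable scaled by $\sqrt2$: write $\Phi^1_\I(\xi) = \sigma(\xi/\sqrt2)$ (up to a choice of which rational point is the omitted $\mathbf{n}$, corresponding to $t=\infty$). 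For $\mathbf{S}^1_\II$ (the circle $x_1^2+x_2^2=2$) one uses the same parametrization scaled by $\sqrt2$ in the target, and for $\mathbf{S}^1_\III$ one first identifies the plane section $W\cap\{x_0^2+x_1^2+x_2^2=1\}$ as a circle and writes down an explicit rational parametrization of it with $t\in\mathbb{Q}$.

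The key steps, in order, are: (1) write down each $\Phi$ explicitly as a rational function of $t$ (or of $t$ and a fixed radical), and check it is a continuous bijection onto $\mathcal{X}\setminus\{\mathbf n\}$ — this is elementary, since stereographic projection is a homeomorphism; (2) identify the image of the rational points, i.e.\ show $\Phi$ sends $\sqrt2\mathbb{Q}$ (resp.\ $\mathbb{Q}$) bijectively onto $\mathcal{Z}\setminus\{\mathbf n\}$ — here one must check both that the image of a rational $t$ is a rational point of the prescribed form, and, conversely, that every rational point of $\mathcal{X}$ arises this way (surjectivity of the parametrization on rational points, a standard fact for conics with a rational point); (3) compute the height $\HH_{\mathcal Z}(\Phi(p/q))$ in terms of $p$ and $q$, paying attention to the gcd/parity conditions (as in the worked example $p^2+q^2$ vs.\ $(p^2+q^2)/2$); (4) compute the line element $\dd s$ of $\Phi$ to get the local distortion factor $|\Phi(\xi)-\Phi(p/q)| \approx c\,(\text{distortion})\,|\xi-p/q|$; (5) combine (3) and (4) to verify that conditions ($\Phi$-i) and ($\Phi$-ii) hold with the appropriate modified height on $\mathbb{R}$ and the constant $c$ indicated in the theorem ($\sqrt2$, $2$, $\sqrt2$ respectively); (6) invoke Lemma~\ref{lem:main_lemma} to conclude. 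Note the modified height on the source corresponds exactly to the height function on $\sqrt2\mathbb{Q}$ or $\mathbb{Q}$ used in $L_{(\mathbb{R},\sqrt2\mathbb{Q})}$ and $L_{(\mathbb{R},\mathbb{Q})}$, which is why those Lagrange numbers (rather than the unmodified classical one) appear on the left-hand side.

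The main obstacle I expect is step (2)–(3): getting the arithmetic of the height function exactly right. One has to track how $\gcd$ conditions on the primitive integer vector representing $\Phi(p/q)$ depend on $p,q$ (and on congruence conditions modulo small primes, especially $2$), and verify that the resulting height is $\HH_{\mathbb{Q}}^{\mathrm{modified}}(p/q)$ up to precisely the claimed constant — a constant that differs among the three cases because of the different radii/embeddings. For $\mathbf{S}^1_\III$ there is the extra wrinkle that the rational points live in the hyperplane $W$, so one must also confirm that the primitivity normalization there produces the clean identification with $\mathbb{Q}$ (not $\sqrt2\mathbb{Q}$), which is the source of the asymmetry between part (c) and parts (a),(b). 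Once the height bookkeeping is done, everything else is a routine application of the already-established Lemma~\ref{lem:main_lemma}, together with the elementary fact that scaling the domain variable of $\sigma$ by a constant rescales both the approximation quality and the denominators in a controlled way.
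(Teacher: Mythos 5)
Your overall architecture is exactly the paper's: factor each $\Phi$ as an affine map $\varphi:\mathbb{R}\to\mathbf{P}$ followed by the stereographic projection $\Psi:\mathbf{P}\to\mathbf{S}\setminus\{\mathbf{n}\}$, verify conditions ($\Phi$-i) and ($\Phi$-ii), and invoke Lemma~\ref{lem:main_lemma}. However, the concrete map you propose for $\mathbf{S}^1_{\I}$, namely $\Phi^1_{\I}(\xi)=\sigma(\xi/\sqrt2)$, does not work, and the failure occurs precisely at the verification steps you defer. First, the dilation is in the wrong direction: Lemma~\ref{lem:main_lemma} yields $L_{(\mathbb{R},\sqrt2\mathbb{Q})}(\xi)=C\,L(\Phi(\xi))$ with $C$ the expansion factor of $\varphi$, and the theorem requires $C=\sqrt2$; your map contracts the line by $\sqrt2$, giving $C=1/\sqrt2$ and an identity off by a factor of $2$. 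Second, and more seriously, omitting the translation is fatal to ($\Phi$-ii). The paper takes $\varphi(t)=(\sqrt2\,t-1,0)$, and the shift by $-1$ is engineered so that, writing $r=x/y$ as in \eqref{eq:r_as_fraction}, the point $\Phi(r)$ has numerator and denominator that are integer combinations of $x^2$, $y^2$, $\sqrt2\,xy$ with gcd equal to $\gcd(x^2,y^2,\sqrt2\,xy)=1$; hence $\HH_{\mathbf{S}\cap\mathbb{Q}^2}(\Phi(r))=x^2-\sqrt2\,xy+y^2$ uniformly, with no case analysis, and the ratio $\HH_{\mathbf{S}\cap\mathbb{Q}^2}(\Phi(r))/\HH_{\sqrt2\mathbb{Q}}(r)=r^2-\sqrt2\,r+1$ matches $|\varphi(r)-\mathbf{n}|^2/(2RD)$ exactly.

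If you instead use an unshifted linear map (any $\varphi(t)=(ct,0)$), the primitive integer representative of $\Phi(r)$ acquires parity-dependent common factors of $2$, and the quantity $\bigl(\HH_{\mathbf{S}\cap\mathbb{Q}^2}(\Phi(r))/\HH_{\sqrt2\mathbb{Q}}(r)\bigr)\big/\bigl(|\varphi(r)-\mathbf{n}|^2/(2RD)\bigr)$ takes different constant values ($2$, $1$, or $1/2$) on different congruence classes of $(p,q)$ modulo $2$. Since the proof of Lemma~\ref{lem:main_lemma} needs this quotient to be identically $1$ (otherwise different subsequences of rational approximants contribute different constants to the $\limsup$), condition ($\Phi$-ii) genuinely fails for your candidate map; the modified height on $\sqrt2\mathbb{Q}$ alone does not absorb the discrepancy. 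The missing idea is the correct choice of affine map --- in each of the three cases the paper translates $\mathbf{P}$ so that $\varphi$ carries $K$ onto $\mathbf{P}\cap\mathbb{Q}^l$ \emph{and} the resulting coordinates of $\Phi(r)$ lie in the $\mathbb{Z}$-span of $\{x^2,y^2,\sqrt2\,xy\}$ (resp.\ $\{p^2,q^2,pq\}$ for $\mathbf{S}^1_{\III}$), which is what makes the gcd computation and hence the height formula come out clean. The rest of your outline (continuity, bijectivity, surjectivity onto rational points of a conic with a rational point, and the final appeal to the Main Lemma) is correct and matches the paper.
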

\begin{corollary}\label{cor:equality_lagrange_spectra_S1}
For $S^1$, we have
\begin{align*}
\LLL(\mathbf S^1_{\I},  \mathbf S^1_{\I} \cap \mathbb{Q}^2)
&=
\frac1{\sqrt2}
\LLL(\mathbb{R}, \sqrt2\mathbb{Q}),\\
\LLL(\mathbf S^1_{\II},  \mathbf S^1_{\II} \cap \mathbb{Q}^2)
&=
\frac1{2}
\LLL(\mathbb{R}, \sqrt2\mathbb{Q}),\\
\LLL(\mathbf S^1_{\III},  \mathbf S^1_{\III} \cap \mathbb{Q}^3)
&=
\frac1{\sqrt2}
\LLL(\mathbb{R}, \mathbb{Q}).
\end{align*}
\end{corollary}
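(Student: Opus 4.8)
The plan is to obtain Corollary~\ref{cor:equality_lagrange_spectra_S1} as a purely formal consequence of Theorem~\ref{thm:new_main1}: all the analytic substance is contained in that theorem, and only some elementary bookkeeping remains. I will describe the argument for $\mathcal{X}=\mathbf{S}^1_\I$; the cases $\mathbf{S}^1_\II$ and $\mathbf{S}^1_\III$ are identical after replacing the constant $\sqrt2$ by $2$ in the first, and $(\sqrt2\mathbb{Q},\mathbb{Q}^2)$ by $(\mathbb{Q},\mathbb{Q}^3)$ in the second.

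First I would observe that the continuous bijection $\Phi:=\Phi^1_\I: \mathbb{R}\to\mathbf{S}^1_\I\setminus\{\mathbf{n}\}$ supplied by Theorem~\ref{thm:new_main1}(a) restricts to a bijection
\[
\Phi: \mathbb{R}\setminus\sqrt2\mathbb{Q}\;\longrightarrow\;\mathbf{S}^1_\I\setminus\bigl(\mathbf{S}^1_\I\cap\mathbb{Q}^2\bigr).
\]
Indeed, $\Phi$ carries $\sqrt2\mathbb{Q}$ bijectively onto $\mathbf{S}^1_\I\cap\mathbb{Q}^2\setminus\{\mathbf{n}\}$, hence carries the complement $\mathbb{R}\setminus\sqrt2\mathbb{Q}$ bijectively onto $(\mathbf{S}^1_\I\setminus\{\mathbf{n}\})\setminus(\mathbf{S}^1_\I\cap\mathbb{Q}^2\setminus\{\mathbf{n}\})$, and the latter set equals $\mathbf{S}^1_\I\setminus(\mathbf{S}^1_\I\cap\mathbb{Q}^2)$ because $\mathbf{n}\in\mathbf{S}^1_\I\cap\mathbb{Q}^2$. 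In particular every $P\in\mathbf{S}^1_\I\setminus(\mathbf{S}^1_\I\cap\mathbb{Q}^2)$ has a unique preimage $\xi=\Phi^{-1}(P)\in\mathbb{R}\setminus\sqrt2\mathbb{Q}$, and conversely.

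Next I would feed this bijection into the identity $L_{(\mathbb{R},\sqrt2\mathbb{Q})}(\xi)=\sqrt2\,L_{(\mathbf{S}^1_\I,\mathbf{S}^1_\I\cap\mathbb{Q}^2)}(\Phi(\xi))$ of Theorem~\ref{thm:new_main1}(a), valid for every $\xi\in\mathbb{R}\setminus\sqrt2\mathbb{Q}$. For the inclusion $\subseteq$: if $v\in\LLL(\mathbf{S}^1_\I,\mathbf{S}^1_\I\cap\mathbb{Q}^2)$ then $v=L(P)$ for some $P\in\mathbf{S}^1_\I\setminus(\mathbf{S}^1_\I\cap\mathbb{Q}^2)$ with $v>0$; setting $\xi=\Phi^{-1}(P)$ gives $\sqrt2\,v=L_{(\mathbb{R},\sqrt2\mathbb{Q})}(\xi)>0$, so $\sqrt2\,v\in\LLL(\mathbb{R},\sqrt2\mathbb{Q})$ and hence $v\in\tfrac1{\sqrt2}\LLL(\mathbb{R},\sqrt2\mathbb{Q})$. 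For $\supseteq$: if $w\in\LLL(\mathbb{R},\sqrt2\mathbb{Q})$, write $w=L_{(\mathbb{R},\sqrt2\mathbb{Q})}(\xi)$ with $\xi\in\mathbb{R}\setminus\sqrt2\mathbb{Q}$ and $w>0$; then $P=\Phi(\xi)\in\mathbf{S}^1_\I\setminus(\mathbf{S}^1_\I\cap\mathbb{Q}^2)$ and $L(P)=w/\sqrt2>0$, so $w/\sqrt2\in\LLL(\mathbf{S}^1_\I,\mathbf{S}^1_\I\cap\mathbb{Q}^2)$. This yields the first displayed equality of the corollary; the other two follow the same way with the constants indicated above.

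I do not expect a genuine obstacle here. The only two points requiring attention are the set-theoretic accounting for the removed point $\mathbf{n}$ — harmless precisely because $\mathbf{n}$ lies in $\mathcal{Z}$ and is therefore already excluded from the set over which the Lagrange spectrum is formed — and the remark that multiplication by a fixed positive constant ($\sqrt2$ or $2$) preserves the strict inequality $L>0$ appearing in the definition of $\LLL$, so no spurious value $0$ is introduced or deleted on either side. Everything of real weight — constructing the maps $\Phi^1_\bullet$, checking that they are continuous bijections with the stated rationality and height properties, and proving the Lagrange-number identities — is exactly the content of Theorem~\ref{thm:new_main1}.
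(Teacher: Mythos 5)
Your argument is correct and is exactly the (implicit) derivation the paper intends: the corollary is stated without a separate proof precisely because it is the formal consequence of Theorem~\ref{thm:new_main1} that you spell out, namely that $\Phi$ restricts to a bijection between the irrational points of the two spaces and the Lagrange-number identity transports the spectrum by the fixed constant. Your two cautionary remarks (the removed point $\mathbf{n}$ lies in $\mathcal{Z}$, and scaling by a positive constant preserves the condition $L>0$) are the only details worth checking, and you handle both correctly.
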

The usefulness of Corollary~\ref{cor:equality_lagrange_spectra_S1} comes from the fact that the spectra
$
\LLL(\mathbb{R}, \sqrt2\mathbb{Q})
$
and
$
\LLL(\mathbb{R}, \mathbb{Q})
$
have been studied by many authors and we already know quite a bit about their structure.
For example, it is possible to relate A.~Schmidt's prior result in \cite{Sch76} with the space $(\mathbb{R}, \sqrt2\mathbb{Q})$. 
See \cite{KS} 
for more discussion on this.
As a result, we deduce that the smallest accumulation point of 
$\LLL(\mathbf S^1_{\I},  \mathbf S^1_{\I} \cap \mathbb{Q}^2)$
is $2$
and 
\begin{multline*}
\LLL(\mathbf S^1_{\I},  \mathbf S^1_{\I} \cap \mathbb{Q}^2) \cap (0, 2) 
= \\
\left\{
\sqrt{4 - \frac1{x^2}}\Big| x= 1, 5, 11, 29, \dots
\right\} 
\cup
\left\{
\sqrt{4 - \frac2{y^2}}\Big| y= 1, 3, 11, 17, \dots
\right\}.
\end{multline*}
Here, $x$ and $y$ 
arises from an integer solution $(x, y_1, y_2)$ (with $y = y_1$ or $y = y_2$) satisfying the Diophantine equation
\begin{equation} \label{eq:Markoff_equation_2}
2x^2 + y_1^2 + y_2^2 = 4xy_1y_2.
\end{equation}
This gives a complete description of the initial discrete part of 
$\LLL(\mathbf S^1_{\I},  \mathbf S^1_{\I} \cap \mathbb{Q}^2)$.
One can make similar statements for
$\LLL(\mathbf S^1_{\II},  \mathbf S^1_{\II} \cap \mathbb{Q}^2)$.


Next, we present our results for $S^2$, that is, for the cases $\mathcal{X} = \mathbf{S}^2_{\I}$, $\mathbf{S}^2_{\II}$,  $\mathbf{S}^2_{\III}$. 
%

\begin{theorem}\label{thm:new_main2}
Consider $(\mathbb{C}, K)$ where 
\[
K = 
\begin{cases}
\mathbb{Q}(\sqrt{-1}) &\text{ for (a)}, \\
\mathbb{Q}(\sqrt{-2}) &\text{ for (b)}, \\
\mathbb{Q}(\sqrt{-3}) &\text{ for (c)}.
\end{cases}
\]
The distance $\dd$ on $\mathbb{C}$ is the usual Euclidean distance and the definitions of height functions on $K$ are given in \S\ref{sec:summary_of_heights}.
\begin{enumerate}[font=\upshape, label=(\alph*)]
\item
There exists a continuous bijection 
\[
\Phi^2_{\I}: \mathbb{C} \longrightarrow \mathbf{S}^2_{\I} \setminus \{ \mathbf{n} \}
\]
for some $\mathbf{n} \in 
\mathbf{S}^2_{\I} \cap \mathbb{Q}^3$ 
such that 
$\Phi^2_{\I}$ maps $\mathbb{Q}(\sqrt{-1})$ onto 
$\mathbf{S}^2_{\I} \cap \mathbb{Q}^3 \setminus \{ \mathbf{n} \}$ 
and
\[
L_{(\mathbb{C}, \mathbb{Q}(\sqrt{-1}))}(\xi) = \sqrt 2 L_{(\mathbf{S}^2_{\I}, \mathbf{S}^2_{\I}\cap\mathbb{Q}^3)}(\Phi (\xi))
\]
for every $\xi \in \mathbb{C} \setminus \mathbb{Q}(\sqrt{-1})$.
\item
There exists a continuous bijection
\[
\Phi^2_{\II}: \mathbb{C} \longrightarrow \mathbf{S}^2_{\II} \setminus \{ \mathbf{n} \}
\]
for some $\mathbf{n} \in 
\mathbf{S}^2_{\II} \cap \mathbb{Q}^3$ 
such that 
$\Phi^2_{\II}$ maps $\mathbb{Q}(\sqrt{-2})$ onto 
$\mathbf{S}^2_{\II} \cap \mathbb{Q}^3 \setminus \{ \mathbf{n} \}$ 
and
\[
L_{(\mathbb{C}, \mathbb{Q}(\sqrt{-2}))}(\xi) = \sqrt2 L_{(\mathbf{S}^2_{\II}, \mathbf{S}^2_{\II}\cap\mathbb{Q}^3)}(\Phi (\xi))
\]
for every $\xi \in \mathbb{C} \setminus \mathbb{Q}(\sqrt{-2})$.
\item
There exists a continuous bijection
\[
\Phi^2_{\III}: \mathbb{C} \longrightarrow \mathbf{S}^2_{\III} \setminus \{ \mathbf{n} \}
\]
for some $\mathbf{n} \in 
\mathbf{S}^2_{\III} \cap \mathbb{Q}^4$ 
such that 
$\Phi^2_{\III}$ maps $\mathbb{Q}(\sqrt{-3})$ onto 
$\mathbf{S}^2_{\III} \cap \mathbb{Q}^4 \setminus \{ \mathbf{n} \}$ 
and
\[
L_{(\mathbb{C}, \mathbb{Q}(\sqrt{-3}))}(\xi) = \sqrt2 L_{(\mathbf{S}^2_{\III}, \mathbf{S}^2_{\III}\cap\mathbb{Q}^4)}(\Phi (\xi))
\]
for every $\xi \in \mathbb{C} \setminus \mathbb{Q}(\sqrt{-3})$.
\end{enumerate}
\end{theorem}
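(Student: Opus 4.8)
The plan is to obtain all three parts at once from Lemma~\ref{lem:main_lemma}: it suffices to produce, in each case, a continuous bijection $\Phi=\Phi^2_{\bullet}\colon\mathbb{C}\to\mathcal{X}\setminus\{\mathbf{n}\}$ that restricts to a bijection between the relevant imaginary quadratic field — $\mathbb{Q}(\sqrt{-1})$, $\mathbb{Q}(\sqrt{-2})$, $\mathbb{Q}(\sqrt{-3})$ for $\mathbf{S}^2_{\I},\mathbf{S}^2_{\II},\mathbf{S}^2_{\III}$ — and $\mathcal{Z}\setminus\{\mathbf{n}\}$, and to verify conditions ($\Phi$-i) and ($\Phi$-ii) of that lemma with the constant in ($\Phi$-ii) equal to $\sqrt{2}$ under the normalizations fixed in \S\ref{sec:summary_of_heights}. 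These maps are the $S^2$-analogues of the three maps built in \S\ref{Sec:one_dimension} for Theorem~\ref{thm:new_main1}: for $\mathbf{S}^2_{\I}$ one takes an inverse stereographic projection $\mathbb{C}\to S^2$ from a rational point $\mathbf{n}$ of the unit sphere (after an appropriate real rescaling of $\mathbb{C}$ so the arithmetic lines up); for $\mathbf{S}^2_{\II}$ the same map post-composed with the dilation sending the unit sphere to the sphere of radius $\sqrt{2}$; and for $\mathbf{S}^2_{\III}$ the stereographic parametrization of the round $2$-sphere lying in the hyperplane $W$, followed by an affine change of coordinates arranged so that the four basis vectors $\mathbf{e}_i\in\mathbf{S}^2_{\III}$ are images of $\mathbb{Q}(\sqrt{-3})$-points.

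I would first dispose of ($\Phi$-i). That $\Phi$ is a continuous bijection onto $\mathcal{X}\setminus\{\mathbf{n}\}$ is immediate from the geometry of stereographic projection; the content is the arithmetic refinement, namely that stereographic projection from a rational point of $\mathcal{X}$ identifies $\mathcal{Z}\setminus\{\mathbf{n}\}$ with the ``rational'' complex numbers of the appropriate quadratic field. One proves this by writing a rational $z=P/Q$ in lowest terms in the ring of integers $\mathcal{O}_d$ (namely $\mathbb{Z}[\sqrt{-1}]$, $\mathbb{Z}[\sqrt{-2}]$, $\mathbb{Z}[\omega]$), substituting into the explicit formula for $\Phi(z)$, clearing denominators, and reading off a primitive integer point of $\mathcal{X}$; the reverse direction recovers $z$ from such a point, and injectivity is clear. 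All three rings are principal, so ``lowest terms'' is unambiguous.

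The heart of the matter is ($\Phi$-ii), namely the asymptotic identity
\[
\HH_{\mathcal{Z}}\!\big(\Phi(z)\big)\,\dd\!\big(\Phi(\xi),\Phi(z)\big)=\sqrt{2}\,\HH(z)\,\lvert\xi-z\rvert\,\big(1+o(1)\big)\qquad\text{as }z\to\xi\ \text{through }\mathbb{Q}(\sqrt{-d}),
\]
from which Lemma~\ref{lem:main_lemma} yields $L_{(\mathbb{C},\,\mathbb{Q}(\sqrt{-d}))}(\xi)=\sqrt{2}\,L_{(\mathcal{X},\mathcal{Z})}(\Phi(\xi))$ for every $\xi$ outside $\mathbb{Q}(\sqrt{-d})$. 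This separates into two estimates. The metric one is routine: $\Phi$ is a smooth conformal map, so $\dd(\Phi(\xi),\Phi(z))=\lambda(\xi)\lvert\xi-z\rvert+O(\lvert\xi-z\rvert^{2})$, where $\lambda$ is the explicit, continuous, strictly positive conformal factor of the parametrization; since $\HH(z)\lvert\xi-z\rvert$ stays bounded along an approximating sequence while the quadratic error tends to $0$, only the leading term contributes to the $\limsup$. The arithmetic estimate is the main obstacle: one must compute $\HH_{\mathcal{Z}}(\Phi(P/Q))$ exactly as a function of $N(Q)$ and of the residues of $P$ and $Q$ modulo the small primes of $\mathcal{O}_d$, and compare it with $\HH(P/Q)$. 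Exactly as in the $S^1$ computation sketched in the Introduction — where a common factor depending on the parity of $p$ and $q$ had to be absorbed — here the primitive representative of $\Phi(P/Q)$ carries a common factor governed by congruences modulo the prime(s) of $\mathcal{O}_d$ above $2$ (and, for $\mathbf{S}^2_{\III}$, above $3$); the height functions recorded in \S\ref{sec:summary_of_heights} are normalized precisely so that this factor cancels against the corresponding factor on the $\mathcal{X}$-side, leaving the single constant $\sqrt{2}$ with no dependence on $\xi$. Carrying out this congruence analysis case by case for $d=1,2,3$ and checking that the two heights agree up to exactly $\sqrt{2}$ completes ($\Phi$-ii), and Lemma~\ref{lem:main_lemma} then gives the three displayed identities simultaneously.
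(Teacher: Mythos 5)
Your overall architecture is the paper's: reduce everything to Lemma~\ref{lem:main_lemma} by building $\Phi=\Psi\circ\varphi$ from a stereographic projection and verifying ($\Phi$-i), ($\Phi$-ii). But two of your concrete steps do not survive contact with the actual constructions. First, the map you propose for $\mathbf{S}^2_{\II}$ --- ``the same map post-composed with the dilation sending the unit sphere to the sphere of radius $\sqrt2$'' --- cannot work: dilating the unit-sphere parametrization by $\sqrt2$ sends the rational points of $S^2$ to points with coordinates in $\sqrt2\,\mathbb{Q}$, which are not the rational points of $\{x_1^2+x_2^2+x_3^2=2\}$, and it keeps the field $\mathbb{Q}(\sqrt{-1})$ rather than switching to $\mathbb{Q}(\sqrt{-2})$. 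The paper instead chooses a genuinely different configuration ($\mathbf{n}=(0,1,1)$, $\mathbf{P}=\{x_2+x_3=1\}$) and the affine map $\varphi(u+v\sqrt{-2})=(1-2v,\,u,\,1-u)$, whose image of $\mathcal{O}_{\mathbb{Q}(\sqrt{-2})}$-fractions lands on integral points of that sphere. Similarly, for $\mathbf{S}^2_{\I}$ a ``real rescaling of $\mathbb{C}$'' is not enough; the paper needs the twisted map $\varphi(u+\mathrm{i}v)=(u-v,\,u+v-1,\,0)$ precisely so that the arithmetic in ($\Phi$-ii) comes out exactly.

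Second, you have mispredicted where the arithmetic difficulty lies. You expect the primitive representative of $\Phi(P/Q)$ to carry a common factor governed by congruences above $2$ (and $3$), cancelled by a specially normalized height on $K$. That is what happens in the $S^1_{\I}$, $S^1_{\II}$ cases (hence the modified height on $\sqrt2\,\mathbb{Q}$), but in all three $S^2$ cases the height on $K=\mathbb{Q}(\sqrt{-d})$ is the completely standard $\HH_K(\alpha/\beta)=|\beta|^2$, and the real content is proving that \emph{no} common factor appears: writing $r=\alpha/\beta=(a+b\omega_K)/c$, one shows $\gcd(|\alpha|^2,|\beta|^2,a,b)=1$ (Lemma~\ref{lem:reduced_fraction_in_Ok}(e)) and then, using that $2$ does not split in $\mathbb{Z}[\mathrm{i}]$ or $\mathbb{Z}[\sqrt{-2}]$, that the explicit integer quadruple representing $\Phi(r)$ is already primitive. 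Without that primitivity argument you cannot read off $\HH_{\mathbf{S}\cap\mathbb{Q}^l}(\Phi(r))$ at all, so this is a genuine gap rather than a detail. (A smaller point: ($\Phi$-ii) is an exact identity equating the height ratio to $|\varphi(z)-\mathbf{n}|^2/(2RD)$, not an asymptotic statement with a constant $\sqrt2$; the $\sqrt2$ is the constant $C$ of ($\Phi$-i), and Lemma~\ref{lem:chordal_metric} makes the whole computation exact, so no conformal-factor expansion or $o(1)$ analysis is needed.)
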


\begin{corollary}\label{cor:equality_lagrange_spectra_S2}
For $S^2$, we have
\begin{align*}
\LLL(\mathbf S^2_{\I}, \mathbf{S}^2_{\I}\cap\mathbb{Q}^3) 
= \frac{1}{\sqrt 2} \LLL\left(\mathbb C, \mathbb Q(\sqrt{-1}) \right), \\
\LLL(\mathbf S^2_{\II}, \mathbf{S}^2_{\II}\cap\mathbb{Q}^3) = \frac{1}{\sqrt 2} \LLL\left(\mathbb C, \mathbb Q(\sqrt{-2}) \right), \\
\LLL(\mathbf S^2_{\III}, \mathbf{S}^2_{\III}\cap\mathbb{Q}^4) = \frac{1}{\sqrt 2} 
\LLL\left(\mathbb C, \mathbb Q(\sqrt{-3}) \right).
\end{align*}
\end{corollary}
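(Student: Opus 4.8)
The plan is to deduce Corollary~\ref{cor:equality_lagrange_spectra_S2} formally from Theorem~\ref{thm:new_main2}: all of the analytic content already resides in that theorem, so what remains is only the set-theoretic bookkeeping around the single excluded point $\mathbf n$, together with the trivial remark that rescaling preserves the conditions defining the spectrum. Fix one of the three cases, write $(\mathcal X,\mathcal Z)$ for the corresponding pair from Table~\ref{tab:notations} and $K$ for the associated imaginary quadratic field ($\mathbb Q(\sqrt{-1})$, $\mathbb Q(\sqrt{-2})$, or $\mathbb Q(\sqrt{-3})$), and let $\Phi$ denote the map $\Phi^2_{\I}$, $\Phi^2_{\II}$, or $\Phi^2_{\III}$ produced by Theorem~\ref{thm:new_main2}. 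Thus $\Phi\colon\mathbb C\to\mathcal X\setminus\{\mathbf n\}$ is a continuous bijection restricting to a bijection of $K$ onto $\mathcal Z\setminus\{\mathbf n\}$, and $L_{(\mathbb C,K)}(\xi)=\sqrt2\,L_{(\mathcal X,\mathcal Z)}(\Phi(\xi))$ for every $\xi\in\mathbb C\setminus K$, where the right-hand Lagrange number is computed against the \emph{full} set $\mathcal Z$.

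First I would observe that $\Phi$ restricts to a bijection $\mathbb C\setminus K\to\mathcal X\setminus\mathcal Z$. Indeed $\mathbf n\in\mathcal Z$, but $\mathbf n$ is not in the image $\mathcal X\setminus\{\mathbf n\}$ of $\Phi$, so $\Phi^{-1}(\mathcal Z)=\Phi^{-1}(\mathcal Z\setminus\{\mathbf n\})=K$; hence $\Phi^{-1}(\mathcal X\setminus\mathcal Z)=\mathbb C\setminus K$ and, since $\Phi$ is onto $\mathcal X\setminus\{\mathbf n\}$, we get $\Phi(\mathbb C\setminus K)=(\mathcal X\setminus\{\mathbf n\})\setminus(\mathcal Z\setminus\{\mathbf n\})=\mathcal X\setminus\mathcal Z$. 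Consequently every $P\in\mathcal X\setminus\mathcal Z$ equals $\Phi(\xi)$ for a unique $\xi\in\mathbb C\setminus K$, and conversely.

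Then I would run this through the definition of the spectrum: since $\Phi$ bijects $\mathbb C\setminus K$ onto $\mathcal X\setminus\mathcal Z$ and $L_{(\mathcal X,\mathcal Z)}(\Phi(\xi))=\tfrac1{\sqrt2}L_{(\mathbb C,K)}(\xi)$ for $\xi\in\mathbb C\setminus K$ by Theorem~\ref{thm:new_main2}, the set $\{\,L_{(\mathcal X,\mathcal Z)}(P):P\in\mathcal X\setminus\mathcal Z\,\}$ equals $\tfrac1{\sqrt2}\{\,L_{(\mathbb C,K)}(\xi):\xi\in\mathbb C\setminus K\,\}$; intersecting both sides with $(0,\infty)$ — an operation that commutes with multiplication by $1/\sqrt2$ — yields
\[
\LLL(\mathcal X,\mathcal Z)=\tfrac1{\sqrt2}\,\LLL(\mathbb C,K).
\]
Specializing to $K=\mathbb Q(\sqrt{-1})$, $\mathbb Q(\sqrt{-2})$, $\mathbb Q(\sqrt{-3})$ in the three respective cases gives the three displayed equalities. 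I do not expect any genuine obstacle here: the one point meriting a moment's care is that deleting the lone rational point $\mathbf n$ from the target is harmless — which is precisely why Theorem~\ref{thm:new_main2} was phrased with $L_{(\mathcal X,\mathcal Z)}$ taken against $\mathcal Z$ rather than $\mathcal Z\setminus\{\mathbf n\}$ — and this is already absorbed into the statement we are invoking.
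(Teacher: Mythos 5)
Your proposal is correct and is exactly the deduction the paper intends: the corollary is stated as an immediate consequence of Theorem~\ref{thm:new_main2}, and your bookkeeping (that $\Phi$ restricts to a bijection $\mathbb{C}\setminus K\to\mathcal{X}\setminus\mathcal{Z}$ because the excluded point $\mathbf{n}$ lies in $\mathcal{Z}$, so the sets of Lagrange numbers correspond under scaling by $1/\sqrt2$) is the right, and only, thing to check.
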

As before, this corollary allows us to use the known theorems of A.~Schmidt \cite{Sch75a}, \cite{Sch11}, \cite{Sch83} about the spectra
$\LLL\left(\mathbb C, \mathbb Q(\sqrt{-1}) \right)$,
$\LLL\left(\mathbb C, \mathbb Q(\sqrt{-2}) \right)$,
and
$\LLL\left(\mathbb C, \mathbb Q(\sqrt{-3}) \right)$,
and to obtain
complete descriptions of the initial discrete parts of the three spectra of $S^2$ in the corollary.
For instance, 
the smallest limit point of $\LLL(\mathbf S^2_{\I}, \mathbf{S}^2_{\I}\cap\mathbb{Q}^3)$
is $\sqrt2$
and
its initial discrete part is
\[
\LLL(\mathbf S^2_{\I}, \mathbf{S}^2_{\I}\cap\mathbb{Q}^3)
\cap (0, \sqrt2)
=
\left\{
\sqrt{
2 - \frac1{2x^2}
}
\Big|
x = 1, 5, 11, 29, \dots
\right\}
\cup
\left\{
\sqrt{
\frac3{10}
\sqrt{41}
}
\right\}
.
\]
Here, $x$ is the first integer in the triple $(x, y_1, y_2)$ which satisfies the Diophantine equation \eqref{eq:Markoff_equation_2}.
As for $\LLL(\mathbf S^2_{\II}, \mathbf{S}^2_{\II}\cap\mathbb{Q}^3)$,
the minimum value (so-called the \emph{Hurwitz's bound}) 
is 1 and 
its smallest limit point 
is (cf.~\cite{Sch11})
\[
\left(
\frac{
4(82 662 667 + 115 77720 \sqrt{47})
}{
405 186 721
}
\right)^{1/2}.
\]
Also, the minimum value of $\LLL(\mathbf S^2_{\III}, \mathbf{S}^2_{\III}\cap\mathbb{Q}^4)$
is $(13/4)^{1/4}$
and its smallest limit point
is (cf.~\cite{Sch83})
\[
\left(
\frac{
14 + 8 \sqrt3
}{
13}
\right)^{1/2}.
\]

In our last theorem below (Theorem~\ref{thm:horocycle_new}), we show that the six maps $\Phi$ in Theorems~\ref{thm:new_main1} and \ref{thm:new_main2} can be extended to hyperbolic spaces (either a hyperbolic plane or a hyperbolic 3-space) as isometries.
To explain, 
let $\mathbf{B}$ be the open ball whose boundary is $\mathbf{S}$.
Also we write
$\mathbb H^2 = \{ (x,t) \in \mathbb R^2 \, | \, t >0\}$ 
and
$\mathbb H^3 = \{ (z,t) \in \mathbb C \times \mathbb R \, | \, t >0\}$, 
the hyperbolic plane and the hyperbolic 3-space, respectively, having $\hat{\mathbb{R}} = \mathbb R \cup \{\infty \}$ or $\hat{\mathbb{C}}= \mathbb C \cup \{\infty \}$ as its boundary.
In \S\ref{sec:proof_of_extension}, we will construct a map $\bar{\Phi}$ (for each of the six cases)
from either $\mathbb{H}^2 \cup \hat{\mathbb{R}}$  or $\mathbb{H}^3 \cup \hat{\mathbb{C}}$ to  $\overline{\mathbf{B}}$,
which coincides with $\Phi$ on the boundary of its domain.
Furthermore, we will show that $\bar{\Phi}$ satisfies the following properties:

\begin{table}
\[
\begin{array}{ccc || ccc}
\toprule
\mathcal{X} & R & C & \mathcal{X} & R & C \\
\midrule
\mathbf{S}^1_\I  & 1 & \sqrt2 &
\mathbf{S}^2_\I  & 1 & \sqrt2 \\
\mathbf{S}^1_\II  & \sqrt2 & 2 &
\mathbf{S}^2_\II  & \sqrt2 & \sqrt2 \\
\mathbf{S}^1_\III  & \sqrt{2/3} & \sqrt2 &
\mathbf{S}^2_\III  & \sqrt3/2 & \sqrt2\\
\bottomrule
\end{array}
\]
\caption{The radii 
$R$ and dilation factors $C$ for each of the six cases of $\mathcal{X}$.}
\label{tab:Constants_R_C}
\end{table}
\begin{theorem}\label{thm:horocycle_new}
The map $\bar{\Phi}$ is an isometry from either $\mathbb{H}^2 \cup \hat{\mathbb{R}}$ or $\mathbb{H}^3 \cup \hat{\mathbb{C}}$ to $\overline{\mathbf{B}}$,
such that the restriction of $\bar{\Phi}$ to the boundary of its domain is the same as $\Phi$.
In addition, for the radii $R$ of $\mathbf S$ and dilation factors $C$ for $\Phi$ 
defined in Section 2 (see Table~\ref{tab:Constants_R_C}),
$\bar{\Phi}$ maps a horosphere 
(in $\mathbb{H}^2$ or $\mathbb{H}^3$) 
based at a rational point $z\in K$ 
(cf.~\eqref{def:field_K})
with radius $1/(2\HH_{K}(z))$ to a horosphere in $\mathbf{B}$ based at $\Phi(z)$ with radius
\begin{equation}
\label{eq:rho}
\rho = \frac{R}{1 + (2 R/C) \cdot \HH_{\mathbf{S}\cap \mathbb{Q}^l}(\Phi(z))}.
\end{equation}
Furthermore, any two horospheres based at two distinct rational points $\mathbf{z}$ and $\mathbf{z}'$ in $\mathbf S$ are either tangent or disjoint. 
\end{theorem}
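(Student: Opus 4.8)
The plan is to build $\bar\Phi$ by upgrading the geometric picture behind the stereographic-projection maps $\Phi$ already constructed in Sections 2 and 3. Each $\Phi$ realizes $\mathcal X = \mathbf S$ as the boundary sphere of a round ball $\mathbf B \subset \mathbb R^{n+1}$ (or $\mathbb R^{n+2}$), obtained from the model sphere $S^n$ by a similarity; concretely $\mathbf B$ is a ball of radius $R$ (Table~\ref{tab:Constants_R_C}) sitting inside the affine space $W$ when $\mathcal X = \mathbf S^n_\III$. On the source side, the upper half-space $\mathbb H^2$ or $\mathbb H^3$ with its ideal boundary $\hat{\mathbb R}$ or $\hat{\mathbb C}$ is the standard conformal ball model's Poincaré half-space partner. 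So the first step is to write $\bar\Phi$ as a composition: the Cayley-type transform sending the half-space model to the ball model $\mathbb B^{n+1}$ (the unit ball), followed by the Euclidean similarity (scaling by $C$ up to the factor matching $R$, plus the rigid motion embedding into the ambient space) that carries $\mathbb B^{n+1}$ onto $\mathbf B$ in a way compatible with how $\Phi$ acts on the boundary. Since Möbius transformations of $\widehat{\mathbb R^{n+1}}$ preserve the hyperbolic metric of the ball/half-space models and the boundary restriction of the Cayley transform composed with the similarity must agree with $\Phi$ by construction, the isometry claim and the boundary-compatibility claim follow essentially by unwinding these identifications; the only care needed is to confirm that the specific $\Phi$ from Section 3 is indeed (the boundary value of) this composition and not some other parametrization, which amounts to checking a couple of base points.

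Next I would track horospheres through this composition. A horosphere in $\mathbb H^2$ or $\mathbb H^3$ based at a finite point $z$ of the boundary is, in the half-space model, a Euclidean sphere tangent to the boundary hyperplane at $z$; say it has Euclidean radius $r$. Under a Möbius transformation a horosphere goes to a horosphere, and there is a clean formula for how the Euclidean radius transforms — it is the standard "horoball size under inversion" computation. Applying this to our composition (Cayley transform, then scaling by the similarity ratio), the image horosphere in $\mathbf B$ is tangent to $\mathbf S$ at $\Phi(z)$, and its radius $\rho$ is a Möbius function of $r$ and of the Euclidean data of $z$. Taking $r = 1/(2\HH_K(z))$ as in the statement and using the explicit relation between $\HH_K(z)$ and $\HH_{\mathbf S\cap\mathbb Q^l}(\Phi(z))$ — which is exactly the height-comparison identity proved in Section 3 en route to Theorems~\ref{thm:new_main1} and \ref{thm:new_main2} (this is where the factors $C$ and $2R/C$ enter) — should collapse the expression to \eqref{eq:rho}. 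I expect this to be a direct but slightly fiddly algebra step: the main risk is bookkeeping the constants $R$ and $C$ correctly across the three sub-cases in each dimension, and handling the point $\mathbf n$ (the image of $z = \infty$) separately, where the horosphere at $\infty$ is a horizontal hyperplane and the formula degenerates.

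Finally, for the tangency/disjointness claim: in the half-space model, the horospheres of radius $1/(2\HH_K(z))$ based at the rational points $z\in K$ form the classical Ford-sphere packing for the imaginary quadratic field $K$ (or for $\mathbb Q$, resp. $\sqrt2\mathbb Q$, in the real cases), and it is a standard fact — provable directly from the ultrametric-type inequality on $|z - z'|$ versus the denominators, i.e. from $|z - z'|^2 \ge 1/(q q')$ type bounds — that any two such Ford spheres are tangent or disjoint. Since $\bar\Phi$ is an isometry of the hyperbolic metric, and "two horoballs are tangent or disjoint" is equivalent to "the signed hyperbolic distance between the horospheres is $\ge 0$", which is isometry-invariant, this property transports verbatim to the horospheres in $\mathbf B$. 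So the last step is: state the Ford-sphere disjointness on the source side (citing or quickly reproving it from the height arithmetic), then invoke isometry-invariance. The one subtlety worth a sentence is that "tangent or disjoint" must be phrased intrinsically (e.g. via the Busemann/horospherical distance being nonnegative) so that it is manifestly preserved by $\bar\Phi$; once that is set up, no further computation is required. The main obstacle overall is the second step — getting \eqref{eq:rho} on the nose with all six constant pairs $(R,C)$ — rather than anything conceptual.
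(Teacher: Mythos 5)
Your proposal follows essentially the same route as the paper: the paper builds $\bar\Phi$ as the affine extension $\bar\varphi(z,s)=\varphi(z)+\tfrac{Cs}{R}(\mathbf n-\mathbf c)$ followed by $\mathrm{Ref}_{\tilde{\mathbf S}}\circ\mathrm{Ref}_{\mathbf P}$ (which is exactly your Cayley-transform-plus-similarity decomposition), computes the image radius of a horosphere under the reflection in $\tilde{\mathbf S}$ by the same ``horoball size under inversion'' calculation (its Lemma~\ref{lem_horo}) and then collapses to \eqref{eq:rho} via ($\Phi$-ii), and proves tangency-or-disjointness on the half-space side from $|\alpha\beta'-\alpha'\beta|^2\ge 1$ before transporting it through the isometry. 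No substantive differences.
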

Geometric meanings of the dilation factors $C$ will become clearer once we introduce all related objects in \S\ref{Sec:Property_of_maps}.
To be brief, 
$C$ is a dilation factor by which either $\mathbb{R}$ or $\mathbb{C}$ is enlarged, before it is mapped onto $\mathbf{S}$.
Also, notice that $C$ is the ratio between two Lagrange numbers in Theorems~\ref{thm:new_main1} and \ref{thm:new_main2}.
See Lemma~\ref{lem:main_lemma}, especially the condition ($\Phi$-i).
The  horosphere (in $\mathbb{H}^2$ or $\mathbb{H}^3$) 
based at a rational point $z\in K$ 
with radius $1/(2\HH_{K}(z))$
is called the Ford horosphere.
By Theorem~\ref{thm:horocycle_new}, the geometric picture of the Ford horoshpere can be applied to the Diophantine approximation on $\mathbf S$.

\section{Set up and some lemmas}\label{Sec:Property_of_maps}
Let $n$ be a positive integer and let $l \ge n + 1$. 
We fix an $(n+1)$-dimensional plane $W$ inside of $\mathbb{R}^l$, which is defined over $\mathbb{Q}$. 
This plane $W$, which will play the role of an ambient space, is equipped with the Euclidean distance $\dd(\mathbf{x}, \mathbf{y}) = | \mathbf{x} - \mathbf{y}|$ inherited from $\mathbb{R}^l$. 

Later in this paper, we will let $n = 1$ for $S^1$ and $n = 2$ for $S^2$. 
Furthermore, for $\mathbf S^{n}_{\I}$ and $\mathbf S^{n}_{\II}$,
we will let $l = n + 1$ and $W = \mathbb{R}^{n+1}$.
For $\mathbf S^{n}_{\III}$, we will let $l = n + 2$ and $W$ be the 
$(n+1)$-dimensional hyperplane in $\mathbb{R}^{n+2}$ given by
\[
W = \{ (x_0, x_2, \dots, x_{n+1}) \in \mathbb{R}^{n+2} \mid x_0 + \cdots + x_{n+1} = 1 \}.
\]

We fix the data $(\mathbf{S}, \mathbf{n}, \mathbf{P})$, satisfying the following conditions: 
\begin{itemize}
\item $\mathbf{S}$ is an $n$-dimensional sphere (inside of $W$) centered at a point $\mathbf{c} \in W$
with radius $R>0$ possessing a rational point $\mathbf{n}$, that is, $\mathbf{n} \in \mathbf{S}\cap \mathbb{Q}^l$,
\item $\mathbf{P}$ is an $n$-dimensional plane (inside of $W$) not containing $\mathbf{n}$ and perpendicular to $\mathbf{n} - \mathbf{c}$ (with respect to the usual dot product in $\mathbb{R}^l$). 
We denote by $D$ the (shortest) distance between $\mathbf{n}$ and $\mathbf{P}$.
\item The product $RD$ is a rational number.
\end{itemize}
Additionally, we let $\tilde{\mathbf S}\subset W$ be the sphere centered at $\mathbf n$ with radius $\sqrt{2RD}$
and write $\hat{W} 
= W \cup \{ \infty \}$. 
Then we define 
$\Psi: \hat{W} \longrightarrow \hat{W}$ to be the \emph{reflection in}
$\tilde{\mathbf{S}}$
\emph{of}  $\hat{W}$ (cf.~\S3.1 in \cite{Beardon}).
\begin{figure}
\begin{tikzpicture}[scale=1.3]
\draw[thick] (-2, 0) -- (5, 0) node[right] {$\mathbf P$};
\draw[->] (0, -.6) -- (0, 3.5);
\draw (0,1.5) circle (1.732); 
\draw[thick] (0,.5) circle (1); 

\draw (0,1.5) -- (4,-.5);

\node at (-1,1.2) {$\mathbf S$};
\node at (1.6,2.8) {$\tilde{\mathbf S}$};
\node at (-.4,1) {$R$};
\node at (.25,.7) {$D$};

\draw[<->] (0,.5) -- (-.8,1.1);
\draw[<->] (.1,1.5) -- (.1,0);

\draw[fill] (0,1.5) circle (0.05) node [above right] {$\mathbf n$};
\draw[fill] (0, .5) circle (0.05) node [below left] {$\mathbf c$};
\draw[fill] (3,0) circle (0.05) node [below] {$\mathbf x$};

\draw[fill] (.8,1.1) circle (0.05) node [above right] {$\Psi(\mathbf x)$};


\end{tikzpicture}
\caption{The stereographic projection $\Psi(\mathbf{x})$.
}\label{projection}
\end{figure}
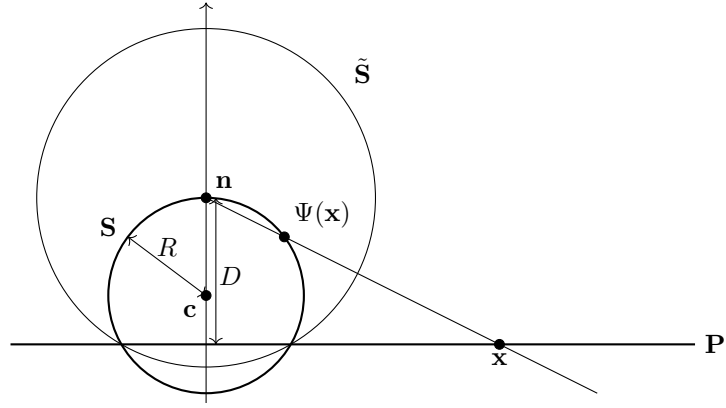
From a general formula of reflections (see for instance (3.1.1) of \cite{Beardon}), we have
\begin{equation}\label{eq:reflection_s_tilde}
\Psi(\mathbf x) 
= \mathbf n + \frac{2RD(\mathbf x - \mathbf n)}{|\mathbf x - \mathbf n|^2}.
\end{equation}
The restriction of $\Psi$ to $\mathbf{S}$ 
gives a \emph{stereographic projection of} $\mathbf{S}$  \emph{at} $\mathbf{n}$.
More precisely, the point $\Psi(\mathbf{x})$ coincides with  the (unique) point of $\mathbf{P}$ such that the three points $\mathbf x, \Psi(\mathbf{x}),\mathbf n$ lie on the same line.
On the other hand, when $\Psi$ is restricted to $\mathbf{P}$, it induces a one-to-one correspondence $\Psi: \mathbf{P}\longrightarrow \mathbf{S} \setminus \{\mathbf{n}\}$.
Recall that $RD$ is assumed to be a rational number and consequently $\Psi$ is defined over $\mathbb{Q}$ (see \eqref{eq:reflection_s_tilde} above).
Therefore, $\Psi$ also maps $\mathbf{P}\cap \mathbb{Q}^l$ bijectively onto $(\mathbf{S}\setminus \{ \mathbf{n} \})\cap \mathbb{Q}^l$.
The following lemma is a straightforward consequence of \eqref{eq:reflection_s_tilde} so we will omit the proof.
\begin{lemma}[the chordal metric on $\mathbf{S}$]\label{lem:chordal_metric}
For $\mathbf x, \mathbf y \in \mathbf P$, 
\[
\left| \Psi (\mathbf x) - \Psi (\mathbf y) \right| 
= \frac{2RD | \mathbf x - \mathbf y |}{|\mathbf x - \mathbf n | | \mathbf y - \mathbf n |}.
\]
\end{lemma}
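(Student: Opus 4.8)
The plan is to compute $\Psi(\mathbf x) - \Psi(\mathbf y)$ directly from the explicit formula \eqref{eq:reflection_s_tilde} for the reflection in $\tilde{\mathbf S}$. Writing $\mathbf u = \mathbf x - \mathbf n$ and $\mathbf v = \mathbf y - \mathbf n$, the formula gives $\Psi(\mathbf x) - \Psi(\mathbf y) = 2RD\bigl(\mathbf u/|\mathbf u|^2 - \mathbf v/|\mathbf v|^2\bigr)$. First I would take the squared Euclidean norm of this vector and expand the dot product:
\[
\left|\frac{\mathbf u}{|\mathbf u|^2} - \frac{\mathbf v}{|\mathbf v|^2}\right|^2
= \frac{1}{|\mathbf u|^2} - \frac{2\,\mathbf u\cdot\mathbf v}{|\mathbf u|^2|\mathbf v|^2} + \frac{1}{|\mathbf v|^2}
= \frac{|\mathbf v|^2 - 2\,\mathbf u\cdot\mathbf v + |\mathbf u|^2}{|\mathbf u|^2|\mathbf v|^2}
= \frac{|\mathbf u - \mathbf v|^2}{|\mathbf u|^2|\mathbf v|^2}.
\]
Since $\mathbf u - \mathbf v = \mathbf x - \mathbf y$, taking square roots and multiplying by $2RD$ yields exactly $|\Psi(\mathbf x) - \Psi(\mathbf y)| = 2RD\,|\mathbf x - \mathbf y|/(|\mathbf x - \mathbf n|\,|\mathbf y - \mathbf n|)$, as claimed. (One should note in passing that $\mathbf x, \mathbf y \in \mathbf P$ and $\mathbf n \notin \mathbf P$ guarantee $|\mathbf x - \mathbf n|, |\mathbf y - \mathbf n| \neq 0$, so the expression is well defined.)

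There is essentially no obstacle here — this is the standard "inversive distance" identity, and the only thing to be careful about is bookkeeping of the dot-product expansion, which is why the authors (and I) would omit it. If one prefers a coordinate-free one-liner, the identity $|\mathbf u/|\mathbf u|^2 - \mathbf v/|\mathbf v|^2| = |\mathbf u - \mathbf v|/(|\mathbf u|\,|\mathbf v|)$ is precisely the statement that inversion in the unit sphere multiplies distances by the reciprocal product of the distances of the two points from the center; our $\Psi$ is that inversion scaled by the factor $2RD$ (the square of the radius $\sqrt{2RD}$ of $\tilde{\mathbf S}$) and translated so its center is at $\mathbf n$, and both translation and scaling interact with the distance formula in the obvious way.
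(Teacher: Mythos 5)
Your computation is correct and is exactly the ``straightforward consequence of \eqref{eq:reflection_s_tilde}'' that the paper alludes to while omitting the proof: the identity $\bigl|\mathbf u/|\mathbf u|^2 - \mathbf v/|\mathbf v|^2\bigr| = |\mathbf u - \mathbf v|/(|\mathbf u|\,|\mathbf v|)$ applied to $\mathbf u = \mathbf x - \mathbf n$, $\mathbf v = \mathbf y - \mathbf n$, scaled by $2RD$. Nothing is missing; the remark that $\mathbf n \notin \mathbf P$ keeps the denominators nonzero is a nice touch.
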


Let $\mathbb{F}$ be 
\begin{equation}\label{def:field_F}
\mathbb{F} = 
\begin{cases}
\mathbb{R} & \text{ if } n = 1,\\
\mathbb{C} & \text{ if } n = 2.
\end{cases}
\end{equation}
Also, let $K$ be a countable dense subset of $\mathbb{F}$, which is equipped with a height function $\HH_K$.
Later, we will choose $K$ to be
\begin{equation}\label{def:field_K}
K = 
\begin{cases}
\sqrt2 \mathbb{Q} & \text{ for } \mathbf{S}^1_{\I},\\
\sqrt2 \mathbb{Q} & \text{ for } \mathbf{S}^1_{\II},\\
\mathbb{Q} & \text{ for } \mathbf{S}^1_{\III},
\end{cases}
\quad
\text{and}
\quad
\begin{cases}
\mathbb{Q}(\sqrt{-1})& \text{ for } \mathbf{S}^2_{\I},\\
\mathbb{Q}(\sqrt{-2})& \text{ for } \mathbf{S}^2_{\II},\\
\mathbb{Q}(\sqrt{-3})& \text{ for } \mathbf{S}^2_{\III}.
\end{cases}
\end{equation}
Suppose that there is a continuous bijection
\begin{equation}\label{def:map_varphi}
\varphi: \mathbb{F} \longrightarrow \mathbf{P} 
\end{equation}
such that $\varphi(K) = \mathbf{P}\cap \mathbb{Q}^l$.
Finally, we define
\begin{equation}\label{def:map_Phi}
\Phi = \Psi 
\circ \varphi: \mathbb{F} \longrightarrow \mathbf{S}\setminus\{ \mathbf{n} \}.
\end{equation}

\begin{lemma}[Main Lemma]\label{lem:main_lemma}
Suppose that the map $\Phi$ satisfies the two conditions:
\begin{enumerate}[font=\upshape, label=($\Phi$-\roman*)]
\item
There exists a positive constant $C>0$ such that 
\[
| \varphi(x_1) - \varphi(x_2) | = C |x_1 - x_2|
\]
for all $x_1, x_2 \in \mathbb{F}$.
\item
For any $z\in K$,
\[
\frac{ \HH_{\mathbf S \cap \mathbb{Q}^l}(\Phi(z)) }{ \HH_{K}(z) }
=
\frac{ | \varphi(z)  - \mathbf n |^2 }{2RD}
\]
\end{enumerate}
Then
\[
L_{(\mathbb{F}, K)}(\xi) = C \cdot L_{(\mathbf{S}, \mathbf{S}\cap \mathbb{Q}^l)}(\Phi(\xi))
\]
for every $\xi\in \mathbb{F}\setminus K$.
\end{lemma}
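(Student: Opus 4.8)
The plan is to unwind the definition of the Lagrange number on $\mathbf{S}$ along the bijection $\Phi$ and match it, term by term, with the Lagrange number on $\mathbb{F}$. Concretely, fix $\xi \in \mathbb{F}\setminus K$ and write $P = \Phi(\xi) \in \mathbf{S}\setminus\{\mathbf{n}\}$. Since $\Phi = \Psi\circ\varphi$ is a bijection $\mathbb{F}\to\mathbf{S}\setminus\{\mathbf{n}\}$ carrying $K$ onto $(\mathbf{S}\setminus\{\mathbf{n}\})\cap\mathbb{Q}^l$, and since removing the single point $\mathbf{n}$ from $\mathbf{S}$ cannot change a $\limsup$ over a dense set of rationals approaching $P\neq\mathbf{n}$, the defining $\limsup$ for $L_{(\mathbf{S},\mathbf{S}\cap\mathbb{Q}^l)}(P)$ may be taken over $Z = \Phi(z)$ with $z$ ranging over $K$. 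Thus I would start from
\[
L_{(\mathbf{S},\mathbf{S}\cap\mathbb{Q}^l)}(\Phi(\xi)) = \limsup_{z\in K} \frac{1}{\HH_{\mathbf{S}\cap\mathbb{Q}^l}(\Phi(z))\,|\Phi(\xi)-\Phi(z)|}.
\]

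Next I would substitute the two structural inputs. By Lemma~\ref{lem:chordal_metric} applied to $\mathbf{x}=\varphi(\xi)$ and $\mathbf{y}=\varphi(z)$,
\[
|\Phi(\xi)-\Phi(z)| = |\Psi(\varphi(\xi))-\Psi(\varphi(z))| = \frac{2RD\,|\varphi(\xi)-\varphi(z)|}{|\varphi(\xi)-\mathbf{n}|\,|\varphi(z)-\mathbf{n}|},
\]
and by condition ($\Phi$-ii), $\HH_{\mathbf{S}\cap\mathbb{Q}^l}(\Phi(z)) = \HH_K(z)\,|\varphi(z)-\mathbf{n}|^2/(2RD)$. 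Multiplying these, the factors $2RD$ and one copy of $|\varphi(z)-\mathbf{n}|$ cancel, leaving
\[
\HH_{\mathbf{S}\cap\mathbb{Q}^l}(\Phi(z))\,|\Phi(\xi)-\Phi(z)| = \HH_K(z)\,|\varphi(\xi)-\varphi(z)|\cdot\frac{|\varphi(z)-\mathbf{n}|}{|\varphi(\xi)-\mathbf{n}|}.
\]
Now apply condition ($\Phi$-i): $|\varphi(\xi)-\varphi(z)| = C|\xi-z|$, so the right-hand side equals $C\,\HH_K(z)\,|\xi-z|\cdot |\varphi(z)-\mathbf{n}|/|\varphi(\xi)-\mathbf{n}|$. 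Taking reciprocals and the $\limsup$ over $z\in K$ gives
\[
L_{(\mathbf{S},\mathbf{S}\cap\mathbb{Q}^l)}(\Phi(\xi)) = \frac{1}{C}\limsup_{z\in K}\left(\frac{1}{\HH_K(z)\,|\xi-z|}\cdot\frac{|\varphi(\xi)-\mathbf{n}|}{|\varphi(z)-\mathbf{n}|}\right).
\]

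The last step is to dispose of the extra ratio $|\varphi(\xi)-\mathbf{n}|/|\varphi(z)-\mathbf{n}|$. Here the key observation is that only those $z\in K$ with $z$ very close to $\xi$ contribute to the $\limsup$: any $z$ bounded away from $\xi$ contributes a bounded quantity, and since $L_{(\mathbb{F},K)}(\xi)$ is the relevant $\limsup$ on the $\mathbb{F}$-side, the supremum is realized (or approached) along sequences $z_j\to\xi$. Along any such sequence, continuity of $\varphi$ forces $\varphi(z_j)\to\varphi(\xi)$, so by ($\Phi$-i) in fact $|\varphi(z_j)-\mathbf{n}| \to |\varphi(\xi)-\mathbf{n}|$, hence the ratio tends to $1$ and does not affect the $\limsup$. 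Making this rigorous is the main obstacle: one must argue that restricting to sequences $z_j\to\xi$ is legitimate for both $\limsup$s simultaneously. I would handle it by the standard two-sided estimate — for every $\varepsilon>0$ there is a neighborhood $U$ of $\xi$ on which $1-\varepsilon < |\varphi(\xi)-\mathbf{n}|/|\varphi(z)-\mathbf{n}| < 1+\varepsilon$, so that the contribution from $z\in K\cap U$ to the $\mathbf{S}$-side $\limsup$ is squeezed between $(1-\varepsilon)/C$ and $(1+\varepsilon)/C$ times the $\mathbb{F}$-side $\limsup$ restricted to $U$, while the contribution from $z\notin U$ is finite and bounded uniformly (note $\HH_K(z)\to\infty$ is forced for any maximizing sequence since $|\xi - z|$ is bounded below off $U$, unless $L_{(\mathbb{F},K)}(\xi)=0$, a case excluded or trivial). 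Letting $\varepsilon\to 0$ yields $L_{(\mathbb{F},K)}(\xi) = C\cdot L_{(\mathbf{S},\mathbf{S}\cap\mathbb{Q}^l)}(\Phi(\xi))$, as claimed.
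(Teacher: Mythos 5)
Your proposal is correct and follows essentially the same route as the paper: both reduce the claim to the identity
\[
\frac{\HH_{\mathbf S\cap \mathbb{Q}^l}(\Phi(z))\,|\Phi(\xi)-\Phi(z)|}{\HH_{K}(z)\,|\xi-z|}
= C\,\frac{|\varphi(z)-\mathbf n|}{|\varphi(\xi)-\mathbf n|}
\]
via Lemma~\ref{lem:chordal_metric} together with ($\Phi$-i) and ($\Phi$-ii). The only difference is that the paper stops at this identity and declares the lemma proved, whereas you spell out the (correct, and implicitly assumed there) final step that the ratio $|\varphi(z)-\mathbf n|/|\varphi(\xi)-\mathbf n|$ tends to $1$ along the sequences that govern the $\limsup$.
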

\begin{proof}
Let $\xi\in \mathbb{F}\setminus K$ and $z \in K$.
From Lemma~\ref{lem:chordal_metric}, we have
\[
| \Phi(\xi) - \Phi(z) |
= 
\frac{2RD | \varphi(\xi) - \varphi(z) |}{|\varphi(\xi)  - \mathbf n | | \varphi(z) - \mathbf n |}.
\]
Combine this with ($\Phi$-i) and ($\Phi$-ii) to obtain
\begin{align*}
\frac{
\HH_{\mathbf S\cap \mathbb{Q}^l}(\Phi(z)) 
| \Phi(\xi) - \Phi(z) |
}{
\HH_{K}(z) | \xi - z |
}
&= 
\frac{
\HH_{\mathbf S\cap \mathbb{Q}^l}(\Phi(z)) 
}{
\HH_{K}(z)
}
\frac{|\varphi(\xi) - \varphi(z)|}{ | \xi - z |}
\frac{
2RD
}{|\varphi(\xi)  - \mathbf n | | \varphi(z) - \mathbf n |}
\\
&=
C \frac{| \varphi(z) - \mathbf n |}{| \varphi(\xi)  - \mathbf n |}.
\end{align*}
This proves the lemma.
\end{proof}

\section{Summary of height functions}
\label{sec:summary_of_heights}
\subsection{The height function on $(\mathbb{R}, K)$ when $\mathbf{S} = \mathbf{S}^1_{\I}, \mathbf{S}^1_{\II}, \mathbf{S}^1_{\III}$}\label{sec:height_function_for_S1}
Recall that the domain of our map $\Phi$ in this case is $\mathbb{F} = \mathbb{R}$.
We choose the set $K$ of its rational points to be
\[
K = 
\begin{cases}
\sqrt2 \mathbb{Q} & \text{ if } \mathbf{S} = \mathbf{S}^1_{\I}, \\
\sqrt2 \mathbb{Q} & \text{ if } \mathbf{S} = \mathbf{S}^1_{\II}, \\
\mathbb{Q} & \text{ if } \mathbf{S} = \mathbf{S}^1_{\III}.
\end{cases}
\]
For the last case  $K = \mathbb{Q}$, the height function on $\mathbb{Q}$ is the usual one, that is, $\HH_{\mathbb{Q}}(\tfrac pq) = q^2$ for a reduced fraction $\frac pq$.
When $K = \sqrt2\mathbb{Q}$, we define the height function on $\sqrt2 \mathbb{Q}$ as follows.
Let $r\in\sqrt2\mathbb{Q}$. 
Write $r = \sqrt2 \cdot p/q_1$ with coprime integers $p$ and $q_1$.
If $q_1$ is odd, then we let $q = q_1$. 
If $q_1$ is even, then ($p$ must be odd and) we let $q = q_1/2$.
As a result, we can express $r$ as 
\begin{equation}
\label{eq:r_as_fraction}
r =
\begin{cases}
\tfrac{\sqrt2 p}q & \text{ with $q$ odd, or}\\
\tfrac{p}{\sqrt2 q} & \text{ with $p$ odd}
\end{cases}
\end{equation}
for coprime integers $p$ and $q$ in a unique way (up to the signs of $p$ and $q$).
We define
\begin{equation}\label{eq:modified_height}
\HH_{\sqrt 2 \mathbb Q}(r) =
\begin{cases}
\HH_{\sqrt 2 \mathbb Q}
\left( \frac{\sqrt 2 p}{q} \right)
= q^2, 
& \text{ if } r = \frac{\sqrt2 p}{q} \text{ with $q$ odd}, \\
\HH_{\sqrt 2 \mathbb Q} \left( \frac{p}{\sqrt 2 q} \right)
= 2q^2,
& \text{ if } r = \frac{p}{\sqrt2 q} \text{ with $p$ odd}.
\end{cases}
\end{equation}
To put it in another way, suppose that $x$ is the numerator and $y$ is the denominator in the fraction in \eqref{eq:r_as_fraction}, so that $r = x/y$ in both cases.
The height of $r$ is then 
\begin{equation}
\label{eq:Height_sqrt2Q_fractional_expression}
\HH_{\sqrt2\mathbb{Q}}(r) 
=
\HH_{\sqrt2\mathbb{Q}}(\tfrac xy)  = y^2.
\end{equation}
Notice that $x^2$, $y^2$ and $\sqrt 2 xy$ are all integers satisfying
\begin{equation}\label{coprime}
\gcd \left( x^2, y^2, \sqrt 2 xy \right)  = 1.
\end{equation}
For more detailed discussions on the Diophantine approximation in $(\mathbb R, \sqrt 2 \mathbb Q)$, consult \cite{KS}.

\subsection{The height function on $(\mathbb{C}, K)$ when $\mathbf{S} = \mathbf{S}^2_{\I}, \mathbf{S}^2_{\II}, \mathbf{S}^2_{\III}$}
The domain of $\Phi$ in this case is $\mathbb{F} = \mathbb{C}$ and the set $K$ of its rational points is chosen to be
\[
K = 
\begin{cases}
\mathbb{Q}(\sqrt{-1}) & \text{ if } \mathbf{S} = \mathbf{S}^2_{\I}, \\
\mathbb{Q}(\sqrt{-2}) & \text{ if } \mathbf{S} = \mathbf{S}^2_{\II}, \\
\mathbb{Q}(\sqrt{-3}) & \text{ if } \mathbf{S} = \mathbf{S}^2_{\III}.
\end{cases}
\]
In this case, we denote by $\mathcal{O}_K$ the ring of integers of $K$ and write each $r\in K$ as $r = \alpha/\beta$ where $\alpha$ and $\beta$ are elements of $\mathcal{O}_K$ such that the ideal generated by $\alpha$ and $\beta$ is the entire $\mathcal{O}_K$. 
Then we define
\begin{equation}\label{eq:height_class_number_one_imag_quad_2}
\HH_K(z) = 
\HH_{K} \left( \frac{\alpha}{\beta} \right) = \beta \beta^{\sigma} = | \beta |^2.
\end{equation}
Here, $\beta^{\sigma}$ is the complex conjugate of $\beta$ in $\mathbb{C}$.
Since the class number of $\mathcal{O}_K$ is one for the three choices of $K$ above, the expression $z = \alpha/\beta$ is unique up to a unit of $\mathcal{O}_K$ and therefore $\HH_K(z)$ is well-defined.

\subsection{The height function on $(\mathbf{S},\mathbf{S}\cap \mathbb{Q}^l)$}
In all six cases of $\mathbf{S}$, the rational points of $\mathbf{S}$ are defined by $\mathbf{S} \cap \mathbb{Q}^l$ (where $l = n + 1$ or $n + 2$). 
And we utilize the standard height function on $\mathbb{Q}^l$. 
That is, for each $r\in \mathbf{S} \cap \mathbb{Q}^l$, we write $r$ as $r = \mathbf{p}/q$ with $\mathbf{p} = (p_1, \dots, p_l) \in \mathbb{Z}^l$ and $q\in \mathbb{Z}$ such that
\begin{equation}\label{eq:gcd_condition_p_and_q}
\gcd(p_1, \dots, p_l, q) = 1
\end{equation}
and define
\begin{equation}\label{eq:height_in_S_for_six_spheres}
\HH_{\mathbf{S} \cap \mathbb{Q}^l}(r) = q.
\end{equation}
Note that the integers $p_1, \dots, p_l, q$ satisfy
\[
p_1^2 + \cdots + p_l^2 = k q^2
\]
for $k=1$ or $k=2$. 
Therefore the gcd condition \eqref{eq:gcd_condition_p_and_q} is equivalent to $\gcd(p_1, \dots, p_l) = 1$.
In other words, the definition \eqref{eq:height_in_S_for_six_spheres} is equivalent to the one used by Kleinbock and Merrill \eqref{eq:Height_in_Sn}.

\section{Proof of Theorem~\ref{thm:new_main1}; the case of $S^1$}\label{Sec:one_dimension}

\subsection{The case $\mathbf{S} = \mathbf{S}^1_{\I}$}\label{sec:CaseS1_I}
Let $W = \mathbb{R}^2$ and define
\[
\begin{cases}
\mathbf{S} = \{ (x_1, x_2) \in \mathbb{R}^2 \mid x_1^2 + x_2^2 = 1 \},\\
\mathbf{n} = (0, 1), \\ 
\mathbf{P} = \{ (x_1, 0) \in \mathbb{R}^2 \mid x_1 \in \mathbb{R} \}.
\end{cases}
\]
This yields
\begin{equation}
R = D = 1, 
\end{equation}
so that the conditions in \S\ref{Sec:Property_of_maps} are satisfied.
Also, the formula \eqref{eq:reflection_s_tilde} becomes
\begin{equation}
\label{eq:Psi_case_A1}
\Psi(x_1, x_2) =
\frac{
(2x_1, x_1^2 + x_2^2 - 1)}{
x_1^2 + x_2^2 - 2x_2 + 1}.
\end{equation}

Next, we define
$\varphi : \mathbb{R} \longrightarrow \mathbf{P}$ to be
\begin{equation}
\label{eq:def_phi_A1}
\varphi(t) = \left( \sqrt 2 t -1, 0 \right).
\end{equation}
Note that $\varphi$ maps $\sqrt2 \mathbb{Q}$ to $\mathbf{P}\cap \mathbb{Q}^2$ bijectively.
Also, this gives
\[
| \varphi(x_1) -\varphi(x_2) | = \sqrt 2 | x_1 - x_2 |.
\]
so $\varphi$ satisfies the condition ($\Phi$-i) in Lemma~\ref{lem:main_lemma} with $C = \sqrt 2$.

To verify the condition ($\Phi$-ii),
we compute 
$\HH_{\sqrt2 \mathbb{Q}}(r)$ and $\HH_{\mathbf S\cap \mathbb{Q}^2}(\Phi(r))$
for $r \in \sqrt 2 \mathbb Q$.
Using the notations in \S\ref{sec:height_function_for_S1}, we write 
$r=x/y$, so that $\HH_{\sqrt2\mathbb{Q}}(r) = y^2$ (cf.~\eqref{eq:Height_sqrt2Q_fractional_expression}).
To compute $\HH_{\mathbf S\cap \mathbb{Q}^2}(\Phi(r))$, we use \eqref{eq:Psi_case_A1} to get
\begin{align*}
\Phi(r) &= \Phi(\varphi(r)) =
\frac1{r^2 - \sqrt2 r + 1}
\left(\sqrt2 r - 1, r^2 - \sqrt2 r\right) \\
&= \frac{\left(\sqrt 2 xy - y^2, x^2 -\sqrt 2 xy \right)}{x^2 -\sqrt 2 xy + y^2}.
\end{align*}
We verify the gcd condition \eqref{eq:gcd_condition_p_and_q} for $\Phi(r)$ using \eqref{coprime}:
\begin{equation*}
\gcd \left( \sqrt 2 xy -y^2, x^2 -\sqrt 2 xy,x^2 - \sqrt 2 xy + y^2 \right)  = 
\gcd \left( x^2, y^2, \sqrt 2 xy \right)  = 1.
\end{equation*}
This gives 
\begin{equation}
\label{eq:Ht_case_A1}
\HH_{\mathbf S\cap \mathbb{Q}^2}(\Phi(r)) = x^2 -\sqrt 2 xy + y^2.
\end{equation}
So, 
\[
\frac{
\HH_{\mathbf S\cap \mathbb{Q}^2}(\Phi(r))
}{
   \HH_{\sqrt2 \mathbb{Q}}(r)
}
=
\frac{
x^2 - \sqrt2 xy + y^2}{
y^2
}
=r^2 - \sqrt2 r + 1.
\]
On the other hand, 
\begin{equation}\label{eq:factor_case_A1}
\frac{| \varphi(r) - \mathbf{n} |^2}{2RD} =
\frac{| ( \sqrt 2 r  - 1, -1)|^2}{2}
=r^2 -\sqrt 2 r + 1.
\end{equation}
This proves ($\Phi$-ii) in this case.

\subsection{The case $\mathbf{S} = \mathbf{S}^1_{\II}$}\label{sec:CaseS1_II}
As in \S\ref{sec:CaseS1_I}, we let $W = \mathbb{R}^2$.
We define
\[
\begin{cases}
\mathbf{S} = \{ (x_1, x_2) \in \mathbb{R}^2 \mid x_1^2 + x_2^2 = 2 \},\\
\mathbf{n} = (1, 1), \\ 
\mathbf{P} = \{ (x_1, x_2) \in \mathbb{R}^2 \mid x_1 + x_2 = 0 \}.
\end{cases}
\]
This yields
\begin{equation}
R = \sqrt2, \quad D = \sqrt2 
\end{equation}
and 
\begin{equation}
\label{eq:Psi_case_B1}
\Psi(x_1, x_2) =
\frac{
(x_1^2 + x_2^2 +2x_1 - 2x_2 -2, x_1^2 + x_2^2 - 2x_1 + 2x_2 -2)}{
x_1^2 + x_2^2 - 2x_1 - 2x_2 + 2}.
\end{equation}

In this case, we define $\varphi: \mathbb{R} \longrightarrow \mathbf{P}$ to be
\begin{equation}
\label{eq:def_phi_B1}
\varphi(t) = \left(\sqrt 2 t - 1, 1 - \sqrt 2 t\right).
\end{equation}
Then
we get
\[ 
| \varphi(t_1) - \varphi(t_2) |^2
= \left| ( \sqrt 2 (t_1 - t_2), \sqrt 2 (t_2-t_1)) \right| ^2 
= 4 (t_1-t_2)^2,
\]
which proves ($\Phi$-i) with $C = 2$.

For ($\Phi$-ii), let $r\in\sqrt2\mathbb{Q}$ and write $r = x/y$ as before, so that $\HH_{\sqrt2\mathbb{Q}}(r) = y^2$. 
To compute $\HH_{\mathbf S\cap \mathbb{Q}^2}(\Phi(r))$ in this case, we use \eqref{eq:Psi_case_B1} and \eqref{eq:def_phi_B1} to get
\begin{align*}
\Phi(r) &= 
\Psi\left(\varphi(r)\right)
= \Psi\left(\sqrt 2 r - 1, 1 - \sqrt 2 r  \right)
\\
&=
\frac{(r^2 - 1, r^2 - 2\sqrt2r + 1)}{r^2 - \sqrt2 r + 1}
=
\frac{(x^2 - y^2, x^2 - 2\sqrt 2 xy +y^2)}
{x^2 - \sqrt 2 xy + y^2}.
\end{align*}
The gcd condition \eqref{eq:gcd_condition_p_and_q} is verified by \eqref{coprime}, that is,
\begin{equation*}
\gcd \left(
x^2 - y^2, 
x^2 - 2\sqrt 2 xy +y^2, 
x^2 - \sqrt 2 xy + y^2 \right)
=
\gcd \left( x^2, y^2, \sqrt 2 xy \right)  = 1.
\end{equation*}
Therefore we have
\begin{equation}
\label{eq:Ht_case_B1}
\HH_{\mathbf S\cap \mathbb{Q}^2}(\Phi(r))
=
x^2 - \sqrt 2 xy + y^2.
\end{equation}
On the other hand,
\begin{equation}\label{eq:factor_case_B1}
\frac{| \varphi(r ) - \mathbf{n} |^2}{2RD} =
\frac{ 
(\sqrt 2 r- 2)^2 + 2r^2
}{4}
=r^2 - \sqrt2r + 1.
\end{equation}
Combining this with \eqref{eq:Ht_case_B1}, we obtain
($\Phi$-ii) in this case.

\subsection{The case $\mathbf{S} = \mathbf{S}^1_{\III}$}\label{sec:CaseS1_III}
In this case, we let the ambient space $W$ be 
\[
W 
= \{ (x_0, x_1, x_2) \in \mathbb{R}^3 \mid
x_0 + x_1 + x_2 = 1
\}
\]
and define
\[
\begin{cases}
\mathbf{S} = \left\{ (x_0, x_1, x_2) \in W
\, | \, x_0^2 + x_1^2 + x_2^2 = 1 \right\},\\
\mathbf{n} = (0, 0, 1), \\ 
\mathbf{P} = \{ (x_0, x_1, 0) \in W 
\mid x_0 + x_1 = 1 \}.
\end{cases}
\]
This gives
\begin{equation}
R = \sqrt{\frac 23}, \quad D = \sqrt{\frac 32} 
\end{equation}
and 
\begin{equation}
\label{eq:Psi_case_C1}
\Psi(x_0, x_1, x_2) =
\frac{
(2x_0, 2x_1, x_0^2 + x_1^2 + x_2^2 - 1)}{
x_0^2 + x_1^2 + x_2^2 - 2x_2 + 1}.
\end{equation}
Note that $\Psi(\mathbf x) \in W$ for $\mathbf x \in W$.
In this case, 
We define $\varphi : \mathbb{R} \longrightarrow \mathbf{P}$ to be
\begin{equation}
\label{eq:def_phi_C1}
\varphi(t) = (t, 1-t, 0).
\end{equation}
The property ($\Phi$-i) is easily seen to be satisfied with $C = \sqrt 2$ because
$$
| \varphi(t_1) - \varphi(t_2) |^2 = 2(t_1 - t_2)^2.
$$
For each $r\in\mathbb{Q}$, we write $r = p/q$ as a reduced fraction, so that $\HH_{\mathbb{Q}}(r) = q^2$.
Also, 
To compute $\HH_{\mathbf{S}\cap\mathbb{Q}^3}(\Phi(r))$, we see that
\[
\Phi (r)
=\Psi\left(
r, 1-r, 0
\right)
=
\frac{(r, 1- r,r^2 - r)}{r^2 - r + 1}
=
\frac{(pq, q^2 - pq,p^2 - pq)}{p^2 + q^2 - pq}.
\]
Since
\[
\gcd \left(pq, q^2 - pq,p^2 - pq, 
p^2 + q^2 - pq 
\right)
= \gcd \left( p^2, q^2, pq \right)  = 1
\]
the gcd condition \eqref{eq:gcd_condition_p_and_q} is satisfied in this case and 
we have
\begin{equation}
\label{eq:Ht_case_C1}
\HH_{\mathbf S \cap \mathbb{Q}^3}\left(\Phi(r)\right)
= p^2  + q^2 - pq.
\end{equation}
Finally,
\begin{equation}
\label{eq:factor_case_C1}
\frac{| \varphi(r) - \mathbf{n} |^2}{2RD} =
\frac{r^2 + (1-r)^2 + (-1)^2}{2}
=\frac{p^2 -pq + q^2}{q^2}.
\end{equation}
Therefore, 
\eqref{eq:Ht_case_C1}
and 
\eqref{eq:factor_case_C1}
give
($\Phi$-ii) in this case. 

\section{Proof of Theorem~\ref{thm:new_main2}; the case of $S^2$}\label{Sec:two_dimension}
To prove Theorem~\ref{thm:new_main2}, we will construct a map 
$\varphi: \mathbb{C} \longrightarrow \mathbf{P}$
for each of the cases 
$\mathbf{S} = \mathbf{S}^2_{\I}$,
$\mathbf{S}^2_{\II}$,
$\mathbf{S}^2_{\III}$.
As explained in \S\ref{sec:introduction},
the set of rational points $K$ of $\mathbb{C}$ will be chosen to be
\[
K = 
\begin{cases}
\mathbb{Q}(\sqrt{-1}) & \text{ for the case }\mathbf{S} = \mathbf{S}^2_{\I},\\
\mathbb{Q}(\sqrt{-2}) & \text{ for the case }\mathbf{S} = \mathbf{S}^2_{\II},\\
\mathbb{Q}(\sqrt{-3}) & \text{ for the case }\mathbf{S} = \mathbf{S}^2_{\III}.
\end{cases}
\]
It will be convenient to choose an integral basis over $\mathbb{Z}$ for the ring $\mathcal{O}_K$ of integers of $K$ as 
$\mathcal{O}_K = \mathbb{Z} + \mathbb{Z} \omega_K$ where
\[
\omega_K =
\begin{cases}
\mathrm{i} = \sqrt{-1} & \text{ for the case }\mathbf{S} = \mathbf{S}^2_{\I},\\
\sqrt{-2} &  \text{ for the case }\mathbf{S} = \mathbf{S}^2_{\II}, \\
(-1 + \sqrt{-3})/2 &  \text{ for the case }\mathbf{S} = \mathbf{S}^2_{\III}.
\end{cases}
\]


\begin{lemma}
\label{lem:reduced_fraction_in_Ok}
The notations are as above.
Let $r$ be a nonzero element $K$ and write $r = \alpha/\beta$
where $\alpha$ and $\beta$ are elements of $\mathcal{O}_K$ which generate $\mathcal{O}_K$.
Let $a, b, c$ be the (rational) integers given by
\[
a + b\omega_K = \alpha\beta^{\sigma}
\quad
\text{and}
\quad
c = |\beta|^2 = \beta\beta^{\sigma}.
\]
Here, $\sigma$ is the nontrivial automorphism of $K/\mathbb{Q}$ (the complex conjugation).
Then we have
\begin{enumerate}[font=\upshape, label=(\alph*)]
\item $r = (a + b\omega_K)/c,$
\item $\HH_{K}(r) = c$,
\item $c$ divides $|a + b\omega_K|^2$ (in $\mathbb{Z})$, 
\item $|\alpha|^2 = |a + b\omega_K|^2/c$, and
\item $\gcd(|\alpha|^2, |\beta|^2, a, b) = 1$.
\end{enumerate}
\end{lemma}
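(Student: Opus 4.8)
The plan is to unwind all the definitions in terms of the integral basis $\mathcal{O}_K = \mathbb{Z} + \mathbb{Z}\omega_K$ and to exploit the single structural fact that $\alpha$ and $\beta$ generate the unit ideal. First I would write $r = \alpha/\beta$ and clear denominators by multiplying numerator and denominator by $\beta^\sigma$, giving $r = (\alpha\beta^\sigma)/(\beta\beta^\sigma) = (a + b\omega_K)/c$ by the very definition of $a,b,c$; this is part (a). Part (b) is then immediate from the definition of the height in \eqref{eq:height_class_number_one_imag_quad_2}, namely $\HH_K(r) = \beta\beta^\sigma = |\beta|^2 = c$, provided I check that the representation $r = \alpha/\beta$ with $(\alpha,\beta)$ generating $\mathcal{O}_K$ is consistent with the one used to define the height — which it is, since that is exactly the hypothesis.

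For part (d), I would compute $|\alpha|^2 = \alpha\alpha^\sigma$ and multiply and divide by $c = \beta\beta^\sigma$: since $\alpha\beta^\sigma = a + b\omega_K$, we get $|\alpha\beta^\sigma|^2 = |a+b\omega_K|^2$, and $|\alpha\beta^\sigma|^2 = |\alpha|^2|\beta|^2 = |\alpha|^2 c$, so $|\alpha|^2 = |a+b\omega_K|^2/c$. Part (c) is then a corollary: the left side $|\alpha|^2$ is a rational integer (it is the norm of $\alpha \in \mathcal{O}_K$), so $c \mid |a+b\omega_K|^2$ in $\mathbb{Z}$. This is the cleanest route: (d) does the work and (c) is a one-line consequence rather than an independent argument.

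The one genuinely substantive point is part (e), the coprimality $\gcd(|\alpha|^2, |\beta|^2, a, b) = 1$. Here I would argue by contradiction: suppose a rational prime $p$ divides all four of $|\alpha|^2$, $|\beta|^2$, $a$, $b$. Then $p \mid a + b\omega_K = \alpha\beta^\sigma$ in $\mathcal{O}_K$, so some prime ideal $\mathfrak{p}$ of $\mathcal{O}_K$ lying over $p$ divides $\alpha\beta^\sigma$, hence divides $\alpha$ or divides $\beta^\sigma$ (equivalently $\mathfrak{p}^\sigma \mid \beta$). In either case I need to deduce that both $\alpha$ and $\beta$ share a common prime-ideal factor, contradicting that $(\alpha, \beta) = \mathcal{O}_K$. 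The link is supplied by the conditions $p \mid |\alpha|^2 = \alpha\alpha^\sigma = N(\alpha)$ and $p \mid |\beta|^2 = N(\beta)$: these force a prime above $p$ to divide $\alpha$ and a prime above $p$ to divide $\beta$. One then checks, splitting into the cases $p$ inert, $p$ split, $p$ ramified in $K$, that the combination of $p \mid N(\alpha)$, $p \mid N(\beta)$, and $p \mid \alpha\beta^\sigma$ forces $\alpha$ and $\beta$ to have a common prime-ideal divisor (for $p$ inert, $pO_K$ is prime and divides both; for $p$ ramified with $pO_K = \mathfrak{p}^2$, again $\mathfrak{p}$ divides both; for $p$ split as $\mathfrak{p}\mathfrak{p}^\sigma$, the divisibility $\mathfrak{p}\mathfrak{p}^\sigma \mid \alpha\beta^\sigma$ together with the norm conditions pins down the factor). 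Since class number one is in force, a common prime-ideal divisor is a common element divisor, contradicting $(\alpha,\beta) = \mathcal{O}_K$.

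I expect part (e), and specifically the split-prime case, to be the main obstacle: one must rule out the asymmetric possibility that $\mathfrak{p} \mid \alpha$ while $\mathfrak{p}^\sigma \mid \alpha$ fails, using the extra information $p \mid b$ (so that $p \mid a+b\omega_K$, not merely some prime above $p$). A clean way to finish is to observe that $p \mid a$ and $p \mid b$ give $p \mid \alpha\beta^\sigma$ as an element of $\mathcal{O}_K$, i.e. $\alpha\beta^\sigma/p \in \mathcal{O}_K$; combined with $p \mid N(\alpha)$ and $p \mid N(\beta)$, a valuation count at each prime above $p$ then yields a prime dividing both $\alpha$ and $\beta$. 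The remaining parts (a)–(d) are routine manipulations with norms and conjugates and should be stated compactly.
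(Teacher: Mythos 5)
Your proposal is correct and follows essentially the same route as the paper: parts (a)--(d) via the same norm computation $|a+b\omega_K|^2/c=\alpha\beta^{\sigma}\alpha^{\sigma}\beta/(\beta\beta^{\sigma})=\alpha\alpha^{\sigma}$, and part (e) by supposing a rational prime $p$ divides all four quantities and extracting a contradiction from the coprimality of $\alpha$ and $\beta$ via the prime ideals above $p$. The only cosmetic difference is in the split case: the paper notes directly that coprimality forces $\mathfrak{p}_{\alpha}=\mathfrak{p}_{\beta}^{\sigma}$, so that $\alpha\beta^{\sigma}\notin\mathfrak{p}_{\beta}$ while $p\mathcal{O}_K\subset\mathfrak{p}_{\beta}$, whereas you run a valuation count at the two primes above $p$ --- which does close up correctly.
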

\begin{proof}
By the definition of $a, b, c$, the statements (a) and (b) in the lemma are trivially true.
Also, we have
\[
\frac{|a + b\omega_K|^2}c
=
\frac{\alpha \beta^{\sigma} \cdot \alpha^{\sigma} \beta}{\beta\beta^{\sigma}}
=
\alpha\alpha^{\sigma},
\]
which proves (c) and (d).
It remains to prove (e).

Suppose that there exists a (rational) prime $p$ dividing $|\alpha|^2$ and $|\beta|^2$ in $\mathbb{Z}$.
Then $\alpha$ must be an element of a prime ideal, say, $\mathfrak{p}_{\alpha}$ of $\mathcal{O}_K$ lying above $p$.
Similarly, $\beta$ is an element of, say, $\mathfrak{p}_{\beta}$ lying above $p$.
Since $\alpha$ and $\beta$ are assumed to generate $\mathcal{O}_K$ it follows that
$\alpha\not\in\mathfrak{p}_{\beta}$
and
$\beta\not\in\mathfrak{p}_{\alpha}$.
In particular, $\mathfrak{p}_{\alpha} \neq \mathfrak{p}_{\beta}$. 
Since they are both prime ideals of $\mathcal{O}_K$ above $p$, 
we must have $\mathfrak{p}_{\alpha} = \mathfrak{p}_{\beta}^{\sigma}$
(and the prime $p$ must split in $\mathcal{O}_K$).
Consequently, $\beta^{\sigma} \not\in \mathfrak{p}_{\alpha}^{\sigma} = \mathfrak{p}_{\beta}$.

Now, suppose that $p$ additionally divides $a$ and $b$. 
Then 
$a + b\omega_K$ 
is in $p\mathcal{O}_K = \mathfrak{p}_{\alpha}\mathfrak{p}_{\beta} \subset \mathfrak{p}_{\beta}$.
However, we already proved that $\alpha\not\in\mathfrak{p}_{\beta}$ and $\beta^{\sigma}\not\in\mathfrak{p}_{\beta}$, so that $a + b\omega_K = \alpha\beta^{\sigma}\not\in\mathfrak{p}_{\beta}$.
This is a contradiction.
So, $p$ cannot divide both $a$ and $b$ and we obtain $\gcd(|\alpha|^2, |\beta|^2, a, b) = 1$.
\end{proof}

\subsection{The case $\mathbf{S} = \mathbf{S}^2_{\I}$}\label{sec:CaseS2_I}
The ambient space in this case is $W = \mathbb{R}^3$ and we define
\[
\begin{cases}
\mathbf{S} = \{ (x_1, x_2, x_3) \in \mathbb{R}^3 \mid x_1^2 + x_2^2 + x_3^2 = 1 \},\\
\mathbf{n} = (0, 0, 1), \\ 
\mathbf{P} = \{ (x_1, x_2, 0) \in \mathbb{R}^3 \mid x_1, x_2\in \mathbb{R} \}.
\end{cases}
\]
This gives
\begin{equation}
R = D = 1 
\end{equation}
and
\begin{equation}
\label{eq:Psi_case_A}
\Psi(x_1, x_2, x_3) =
\frac{
(2x_1, 2x_2, x_1^2 + x_2^2 +x_3^2- 1)}{
x_1^2 + x_2^2 + x_3^2 - 2x_3 + 1}.
\end{equation}

We define $\varphi : \mathbb{C} \longrightarrow \mathbf{P}$ to be
\begin{equation}
\label{eq:def_phi_A}
\varphi(u + \mathrm{i} v) = (u - v, u + v -1, 0).
\end{equation}

To prove ($\Phi$-i) in this case, we let 
$z_1 = u_1 + v_1\mathrm{i} 
$
and
$z_2 = u_2 + v_2\mathrm{i}$.
Then
\begin{align*}
| 
\varphi(z_1) 
-
\varphi(z_2) 
|^2
&=
| 
((u_1 -u_2) - (v_1 - v_2), (u_1 - u_2) + (v_1 - v_2), 0) 
|^2 \\
&=
2|z_1 - z_2|^2.
\end{align*}
So this proves ($\Phi$-i) with $C = \sqrt 2$.

For ($\Phi$-ii), let $r \in K$ and write $ r = \alpha/\beta = (a + b\mathrm{i})/c$ using the notations in Lemma~\ref{lem:reduced_fraction_in_Ok}.
To find $\HH_{\mathbf S \cap \mathbb{Q}^3}(\Phi(r))$, we compute $\Phi(r)$ using Lemma~\ref{lem:reduced_fraction_in_Ok}:
\begin{align*} 
\Phi(r) &= 
\Psi\left( \varphi \left( \frac{a + b\mathrm{i}}c\right)\right)
= 
\Psi\left(
\frac ac - \frac bc, \frac ac + \frac bc -1, 0
\right) \\
&=
\frac{
((a - b)c, (a + b - c)c, a^2 + b^2 - (a+b)c)
}{
a^2 + b^2 + c^2 - (a + b)c
} \\
&=
\frac{
(a - b, a + b - |\beta|^2, |\alpha|^2 - a-b)
}{
|\alpha|^2 + |\beta|^2 - a - b
}.  
\end{align*} 
And
\begin{multline}\label{eq:GCD1_case_1_I}
\gcd( a - b, a + b - |\beta|^2, |\alpha|^2 - a-b,|\alpha|^2  + |\beta|^2 - a - b) \\
= \gcd( |\alpha|^2, |\beta|^2, a - b, a + b).
\end{multline}
Note that the prime 2 does not split in $\mathbb{Z}[\mathrm{i}]$, so 2 cannot divide $\gcd(|\alpha|^2, |\beta|^2)$ (see the proof of Lemma~\ref{lem:reduced_fraction_in_Ok}).
Therefore we see that the gcd condition \eqref{eq:gcd_condition_p_and_q} for $\Phi(r)$ is satisfied by 
Lemma~\ref{lem:reduced_fraction_in_Ok} because
\begin{equation}\label{eq:GCD2_case_1_I}
\gcd( |\alpha|^2, |\beta|^2, a - b, a + b)  = \gcd( |\alpha|^2, |\beta|^2, a , b) = 1.
\end{equation}
Hence we obtain
\begin{equation*}
\HH_{\mathbf S\cap \mathbb{Q}^3}(\Phi(r))
=
|\alpha|^2 + |\beta|^2 - a - b.
\end{equation*}
On the other hand,
\begin{align*}
\frac{| \varphi(r) - \mathbf{n}|^2}{2RD} &=
\frac{| ( \frac ac - \frac bc, \frac ac + \frac bc -1, -1) |^2}{2} 
=\frac{a^2 + b^2 + c^2 - c(a + b)}{c^2} \\
&=
\frac{
|\alpha|^2  + |\beta|^2 - a - b
}{
c} \\
&=
\frac{
\HH_{\mathbf S\cap \mathbb{Q}^3}(\Phi(r))
}{
\HH_{K}(r)},
\end{align*}
which gives 
($\Phi$-ii) 
in this case.

\subsection{The case $\mathbf{S} = \mathbf{S}^2_{\II}$}\label{sec:CaseS2_II}
As in the case of $\mathbf{S}^2_{\I}$, the ambient space is again $W = \mathbb{R}^3$.
We define
\[
\begin{cases}
\mathbf{S} = \{ (x_1, x_2, x_3) \in \mathbb{R}^3 \mid x_1^2 + x_2^2 + x_3^2 = 2 \},\\
\mathbf{n} = (0, 1, 1), \\ 
\mathbf{P} = \{ (x_1, x_2, x_3) \in \mathbb{R}^3 \mid x_2 + x_3 = 1 \},
\end{cases}
\]
which gives
\begin{equation}
R = \sqrt2, \quad D = 1/\sqrt2 
\end{equation}
and 
\begin{equation}
\label{eq:Psi_case_B}
\Psi(x_1, x_2, x_3) =
\frac{
(2x_1, x_1^2 + x_2^2 + x_3^2 - 2x_3, x_1^2 + x_2^2 + x_3^2 - 2x_2)}{
x_1^2 + x_2^2 + x_3^2 - 2x_2 - 2x_3 + 2}.
\end{equation}
In this case, we define $\varphi : \mathbb{C} \longrightarrow \mathbf{P}$ to be
\begin{equation}
\label{eq:def_phi_B}
\varphi(u + v\sqrt{-2}) = (1 - 2v, u, 1-u).
\end{equation}
Writing
$z_1 = u_1 + \sqrt{-2}v_1
$
and
$z_2 = u_2 + \sqrt{-2}v_2$,
we get
\begin{align*}
| 
\varphi(z_1) 
-
\varphi(z_2) 
|^2
&=
| 
(-2(v_1 - v_2), u_1 - u_2, -(u_1-u_2)) | ^2 \\
&=
2((u_1-u_2)^2 + 2(v_1 - v_2)^2) =
2|z_1 - z_2|^2,
\end{align*}
which proves ($\Phi$-i) with $C = \sqrt 2$.

For ($\Phi$-ii), we proceed similarly using Lemma~\ref{lem:reduced_fraction_in_Ok}.
Let $r = \alpha/\beta = (a + b\sqrt{-2})/c \in K = \mathbb{Q}(\sqrt{-2})$.
Then
\begin{align*}
\Phi(r) &= 
\Psi\left( \varphi \left( \frac{a + b\sqrt{-2}}c\right)\right)
=
\Psi\left(
1 - \frac{2b}c, \frac ac, 1 - \frac ac
\right) \\
&=
\frac{
((c - 2b)c, a^2 + 2b^2 - 2bc, a^2 + 2b^2 + c^2 - 2(a+b)c)
}{
a^2 + 2b^2 + c^2 - (a + 2b)c
} \\
&=
\frac{
(|\beta|^2 - 2b, |\alpha|^2 - 2b, |\alpha|^2 + |\beta|^2 - 2(a+b))
}{
|\alpha|^2 + |\beta|^2 - (a + 2b)
}.
\end{align*}
To check the gcd condition \eqref{eq:gcd_condition_p_and_q} for $\Phi(r)$, we note
\begin{multline}\label{eq:GCD1_case_1_II}
\gcd( 
|\beta|^2 - 2b, |\alpha|^2 - 2b, |\alpha|^2 + |\beta|^2 - 2(a+b),
|\alpha|^2 + |\beta|^2 - (a + 2b)
) 
\\
= \gcd( |\alpha|^2, |\beta|^2, a, 2b).
\end{multline}
Again, the prime 2 does not split in $\mathbb{Z}[\sqrt{-2}]$, so 2 cannot divide $\gcd(|\alpha|^2, |\beta|^2)$.
So we deduce from Lemma~\ref{lem:reduced_fraction_in_Ok} that
\begin{equation}\label{eq:GCD2_case_1_II}
\gcd( |\alpha|^2, |\beta|^2, a, 2b)  = \gcd( |\alpha|^2, |\beta|^2, a , b) = 1.
\end{equation}
So we obtain
\begin{equation*}
\HH_{\mathbf S\cap \mathbb{Q}^3}(\Phi(r))
=
|\alpha|^2  + |\beta|^2 - (a + 2b).
\end{equation*}
Also,
\begin{align*}
\frac{| \varphi(r) - \mathbf{n}|^2}{2RD} &=
\frac{| 
(1 - \tfrac{2b}c, \tfrac ac - 1, - \tfrac ac)
|^2}{2}
=\frac{a^2 + 2b^2 + c^2 - c(a + 2b)}{c^2}\\
&=
\frac{
|\alpha|^2  + |\beta|^2 - (a + 2b)
}{
c} \\
&=
\frac{
\HH_{\mathbf S\cap \mathbb{Q}^3}(\Phi(r))
}{
\HH_{K}(r)},
\end{align*}
which gives 
($\Phi$-ii) 
in this case.

\subsection{The case $\mathbf{S} = \mathbf{S}^2_{\III}$}\label{sec:CaseS2_III}
In this case, the ambient space $W$ is defined to be
\[
W
= \{ (x_0, x_1, x_2, x_3) \in \mathbb{R}^4 \mid
x_0 + x_1 + x_2 + x_3 = 1
\}.
\]
Also, we define
\[
\begin{cases}
\mathbf{S} = \left\{ (x_0, x_1, x_2, x_3) \in W
\, | \, 
x_0^2 + x_1^2 + x_2^2 + x_3^2 = 1
\right\},\\
\mathbf{n} = (0, 0, 0, 1), \\ 
\mathbf{P} = \{ (x_0, x_1, x_2, 0) \in W
\mid x_0 + x_1 + x_2 = 1 \}.
\end{cases}
\]
This yields
\begin{equation}
R = \sqrt3/2, \quad D = 2/\sqrt3 
\end{equation}
and 
\begin{equation}
\label{eq:Psi_case_C}
\Psi(x_0, x_1, x_2, x_3) =
\frac{
(2x_0, 2x_1, 2x_2, x_0^2 + x_1^2 + x_2^2 + x_3^2- 1)}{
x_0^2 + x_1^2 + x_2^2 + x_3^2 - 2x_3 + 1}.
\end{equation}

We define $\varphi : \mathbb{C} \longrightarrow \mathbf{P}$ to be
\begin{equation}
\label{eq:def_phi_C}
\varphi(u + v\omega_K) = (1-u, u-v, v, 0).
\end{equation}
Recall that in this case $K = \mathbb{Q}(\sqrt{-3})$ and $\omega_K = (-1 + \sqrt{-3})/2$.
It is a simple exercise to show that
\[
|\varphi(z_1) - \varphi(z_2)|^2 = 2|z_1 - z_2|^2,
\]
which proves ($\Phi$-i) with $C = \sqrt 2$.

For ($\Phi$-ii), we write
$r = \alpha/\beta = (a + b\omega_K)/c$
and
\begin{align*}
\Phi(r) &= 
\Psi\left( \varphi \left( \frac{a + b\omega_K}c\right)\right)
=
\Psi\left(
1 - \frac ac, \frac ac - \frac bc, \frac bc, 0
\right) \\
&=
\frac{
(
(c-a)c, (a-b)c, bc, a^2 - ab + b^2 - ac
)
}{
a^2 - ab + b^2 + c^2 - ac
} \\
&=
\frac{
(
|\beta|^2 - a, a-b, b, |\alpha|^2 - a
)
}{
|\alpha|^2 + |\beta|^2 - a
}.
\end{align*}
The gcd condition \eqref{eq:gcd_condition_p_and_q} for $\Phi(r)$ is again satisfied by Lemma~\ref{lem:reduced_fraction_in_Ok} because
\begin{equation*}
\gcd( 
|\beta|^2 - a, a - b, b, |\alpha|^2 - a,
|\alpha|^2 + |\beta|^2 - a
)\\ 
= \gcd( |\alpha|^2, |\beta|^2, a, b)
=1.
\end{equation*}
So,
\begin{equation}
\label{eq:Ht_case_A_III}
\HH_{\mathbf S\cap \mathbb{Q}^4}(\Phi(r))
=
|\alpha|^2  + |\beta|^2 - a.
\end{equation}
And we have
\begin{align*}
\frac{| \varphi(r) - \mathbf{n}|^2}{2RD} &=
\frac{a^2 -ab + b^2 + c^2 - ac}{c^2} \\
&=
\frac{
|\alpha|^2  + |\beta|^2 - a 
}{
c} \\
&=
\frac{
\HH_{\mathbf S\cap \mathbb{Q}^4}(\Phi(r))
}{
\HH_{K}(r)}.
\end{align*}
This proves
($\Phi$-ii) 
in this case.

\section{Proof of Theorem~\ref{thm:horocycle_new}}\label{sec:proof_of_extension}
We continue to use the notations introduced in the beginning of \S\ref{Sec:Property_of_maps}.
Recall from \eqref{def:field_F} that $\mathbb{F}$ is by definition either $\mathbb{R}$ or $\mathbb{C}$,
depending on $n = 1$ or $n = 2$.
As in the introduction, we define $\mathbb{H}^{n+1}$ to be 
\[
\mathbb{H}^{n+1} = \{ (z,s) \in \mathbb F \times \mathbb R \mid z\in \mathbb{F}, s > 0 \},
\]
so that $\mathbb{H}^{n+1}$ is either the hyperbolic surface (when $n = 1$) or the hyperbolic 3-space (when $n = 2$).
In particular, we will identify $\mathbb{F}$ with the set $\mathbb{F} \times \{ 0 \}$.
Then $\hat{\mathbb{F}} = \mathbb{F} \cup \{\infty\}$  is the boundary of $\mathbb{H}^{n+1}$ and therefore 
$\overline{\mathbb{H}^{n+1}}  = \mathbb{H}^{n+1} \cup \hat{\mathbb{F}}$.

Now we define the maps $\overline{\varphi}$ and $\bar{\Psi}$, extending $\varphi$ and $\Psi$
(see Figure~\ref{fig:maps_between_spaces}).
\begin{figure}
\centering
\begin{tikzcd}
\overline{\mathbb{H}^{n+1}} \arrow{r}{\bar\varphi} & 
\overline{\mathbf{H}} \arrow{r}{\bar\Psi} & \overline{\mathbf{B}} \\
\mathbb{F} \arrow[hook]{u}{} \arrow{r}{\varphi} & 
\mathbf{P} 
\arrow[hook]{u}{} 
\arrow{r}{\Psi}
& \mathbf{S} \setminus \{ \mathbf{n} \}
\arrow{l}{}
\arrow[hook]{u}{} 
\end{tikzcd}
\caption{Maps between various spaces}
\label{fig:maps_between_spaces}
\end{figure}
Recall that $\varphi: \mathbb{F} \longrightarrow \mathbf{P}$ is a map (cf.~\eqref{def:map_varphi}) 
satisfying the conditions ($\Phi$-i) and ($\Phi$-ii) in Lemma~\ref{lem:main_lemma}.
For $(z, s) \in \mathbb{H}^{n+1}$, we define $\bar{\varphi}: \overline{\mathbb{H}^{n+1}} \longrightarrow W$
to be
\[
\bar{\varphi}(z, s) = \varphi(z) + \frac{Cs}{R}(\mathbf{n} - \mathbf{c}).
\]
Here, $C$ is the same constant appearing in ($\Phi$-i) of Lemma~\ref{lem:main_lemma}.

To define $\bar{\Psi}$, we let $\mathbf H$ be the connected component of $W \setminus \mathbf P$ containing $\mathbf n$.
Denote by 
$\mathrm{Ref}_{\mathbf P}$ 
and
$\mathrm{Ref}_{\mathbf S}$ 
the reflections of $\hat{W}$ in $\mathbf{P}$ and in $\tilde{\mathbf{S}}$, respectively.
Then we define
\begin{equation}
\bar \Psi = \mathrm{Ref}_{\tilde{\mathbf S}} \circ \mathrm{Ref}_{\mathbf P}.
\end{equation}
Then it is clear that $\bar{\Psi}$ maps $\bar{\mathbf H}$ onto $\bar{\mathbf B}$ and
it is a hyperbolic isometry between them.
Also, it is easy to see that $\overline{\varphi}$ and $\bar{\Psi}$ extend $\varphi$ and $\Psi$, as indicated in Figure~\ref{fig:maps_between_spaces}.

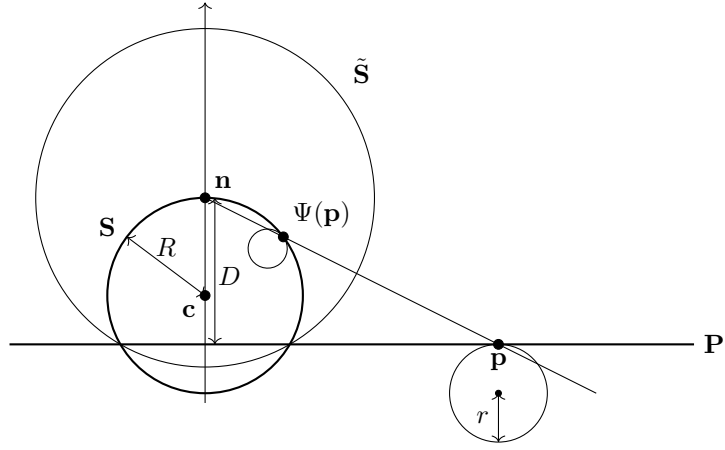
\begin{figure}
\begin{tikzpicture}[scale=1.3]
\draw[thick] (-2, 0) -- (5, 0) node[right] {$\mathbf P$};
\draw[->] (0, -.6) -- (0, 3.5);
\draw (0,1.5) circle (1.732); 
\draw[thick] (0,.5) circle (1); 

\draw (0,1.5) -- (4,-.5);

\node at (-1,1.2) {$\mathbf S$};
\node at (1.6,2.8) {$\tilde{\mathbf S}$};
\node at (-.4,1) {$R$};
\node at (.25,.7) {$D$};

\draw[<->] (0,.5) -- (-.8,1.1);
\draw[<->] (.1,1.5) -- (.1,0);

\draw[fill] (0,1.5) circle (0.05) node [above right] {$\mathbf n$};
\draw[fill] (0, .5) circle (0.05) node [below left] {$\mathbf c$};
\draw[fill] (3,0) circle (0.05) node [below] {$\mathbf p$};

\draw[fill] (.8,1.1) circle (0.05) node [above right] {$\Psi(\mathbf p)$};

\draw (.8,1.1) arc (36.87:36.87+360:0.2); 
\draw[fill] (3,-0.5) circle (0.03);
\draw (3,-.5) circle (0.5); 
\draw[<->] (3, -0.5) -- (3, -1) node[midway, left]{$r$};

\end{tikzpicture}
\caption{Changes of radius under the reflection $\Psi$ in $\tilde{\mathbf{S}}$.}\label{projection2}
\end{figure}

\begin{lemma}\label{lem_horo}
The notations are the same as in \S\ref{Sec:Property_of_maps}.
Additionally, 
let $r$ be the radius of a reflected horosphere by $\mathrm{Ref}_{\mathbf P}$ based at $\mathbf{p}$ tangent to $\mathbf{P}$,
which is on the opposite side to $\mathbf{n}$ with respect to $\mathbf{P}$.
(See Figure~\ref{projection2}.)
Then the radius $\rho$ of the horosphere based at $\Psi(\mathbf{p})$ tangent to $\mathbf{S}$ is given by the formula
\[
\rho = \frac{2 r RD}{| \mathbf p - \mathbf n |^2 +2 r D}.
\]
\end{lemma}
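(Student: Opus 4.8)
The plan is to track a single horosphere through the two reflections that compose $\bar\Psi$, using only elementary properties of inversions in spheres. Write $\tilde{\mathbf S}$ for the sphere of inversion centered at $\mathbf n$ with radius $\sqrt{2RD}$, so that $\Psi = \mathrm{Ref}_{\tilde{\mathbf S}}$ on the relevant locus. The horosphere in question is a Euclidean sphere $\Sigma$ of radius $r$ tangent to the plane $\mathbf P$ at the point $\mathbf p$, sitting on the side of $\mathbf P$ opposite to $\mathbf n$; its center is the point $\mathbf p - r\,\mathbf u$, where $\mathbf u$ is the unit vector pointing from $\mathbf P$ toward $\mathbf n$ (so toward $\mathbf c$). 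I want to compute the image $\Psi(\Sigma)$ and read off its radius $\rho$.

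**Key computation: inversion of a sphere.**
The main step is the classical formula for the image of a sphere under inversion in $\tilde{\mathbf S}$. If $\Sigma$ is a Euclidean sphere not passing through the center $\mathbf n$ of inversion, then $\Psi(\Sigma)$ is again a sphere, and its radius is $k^2 r / \bigl|\,d^2 - r^2\,\bigr|$, where $k^2 = 2RD$ is the square of the inversion radius, $r$ is the radius of $\Sigma$, and $d = |\mathbf n - (\text{center of }\Sigma)|$ is the distance from $\mathbf n$ to the center of $\Sigma$. Since the center of $\Sigma$ is $\mathbf p - r\mathbf u$, I compute
\[
d^2 = |\mathbf p - \mathbf n - r\mathbf u|^2 = |\mathbf p - \mathbf n|^2 - 2r\,\langle \mathbf p - \mathbf n, \mathbf u\rangle + r^2.
\]
The inner product $\langle \mathbf p - \mathbf n, \mathbf u\rangle$ is just $-D$: indeed $\mathbf p$ lies in $\mathbf P$, $\mathbf n$ lies at signed distance $+D$ from $\mathbf P$ along $\mathbf u$, so the component of $\mathbf p - \mathbf n$ along $\mathbf u$ is $-D$. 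Hence $d^2 = |\mathbf p - \mathbf n|^2 + 2rD + r^2$, and therefore $d^2 - r^2 = |\mathbf p - \mathbf n|^2 + 2rD$, which is positive. Substituting,
\[
\rho = \frac{(2RD)\, r}{|\mathbf p - \mathbf n|^2 + 2rD} = \frac{2rRD}{|\mathbf p - \mathbf n|^2 + 2rD},
\]
which is exactly the claimed formula.

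**Tangency bookkeeping and the main obstacle.**
Two small points need care. First, I should confirm that $\Psi$ sends $\Sigma$ to a sphere tangent to $\mathbf S$ at $\Psi(\mathbf p)$, not merely to some sphere: tangency is preserved by inversion (conformality), $\Sigma$ is tangent to $\mathbf P$ at $\mathbf p$, and $\Psi$ carries $\mathbf P$ to $\mathbf S$ and $\mathbf p$ to $\Psi(\mathbf p)$, so $\Psi(\Sigma)$ is tangent to $\mathbf S$ at $\Psi(\mathbf p)$; moreover $\Sigma$ lies on the side of $\mathbf P$ away from $\mathbf n$, i.e. outside $\tilde{\mathbf S}$ in a neighborhood of $\mathbf p$, so its image is a genuine small sphere nestled against $\mathbf S$ rather than a half-space. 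Second, I must make sure the sign in the radius formula is the one giving a positive $\rho$; this is handled by the computation $d^2 - r^2 = |\mathbf p - \mathbf n|^2 + 2rD > 0$ above. I expect the only real obstacle is stating the inversion-of-a-sphere radius formula in a form matched to the paper's conventions (the reflection $\Psi$ in \eqref{eq:reflection_s_tilde} has inversion constant $2RD$, not $1$); once that is pinned down, the rest is the short vector computation just sketched. I would cite \S3.1 of \cite{Beardon} for the relevant facts about reflections in spheres and the preservation of the family of spheres-and-planes.
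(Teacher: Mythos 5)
Your proof is correct and follows essentially the same route as the paper: the paper computes the distance from $\mathbf n$ to the center of $\Sigma$ as $\sqrt{(D+r)^2 - D^2 + |\mathbf p - \mathbf n|^2}$ (which is exactly your $d$) and then derives the image radius by inverting the nearest and farthest points of $\Sigma$ and halving the difference, which is precisely the derivation of the packaged formula $k^2 r/|d^2-r^2|$ you invoke. Your sanity check that $d^2 - r^2 = |\mathbf p - \mathbf n|^2 + 2rD > 0$ (so $\mathbf n$ lies outside $\Sigma$ and the image is a genuine sphere) is a point the paper leaves implicit.
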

\begin{proof}
The distance between $\mathbf{n}$ and the center of the horosphere (in the opposite side of $\mathbf{n}$) based at $\mathbf p\in \mathbf{P}$ with radius $r$ is
\[
\sqrt{ (D +r)^2 - D^2 + |\mathbf p - \mathbf n|^2}.
\]
Thus the distances from $\mathbf{n}$ to the closest and farthest points on the horosphere are
\[
\sqrt{ (D+ r)^2 - D^2 + |\mathbf p - \mathbf n|^2} \pm r.
\]
Recall that the radius of $\tilde{\mathbf{S}}$ is $\sqrt{2RD}$.
From a general property of the reflection $\Psi$, we see that the diameter of the image of the horosphere under $\Psi$ has diameter
\begin{multline*}
\frac{2RD}{\sqrt{ (D + r)^2 - D^2 + | \mathbf p - \mathbf n |^2} - r} - \frac{2RD}{\sqrt{ (D + r)^2 - D^2 + | \mathbf p - \mathbf n |^2} + r} \\
= \frac{4rRD}{(D + r)^2 - D^2 + |\mathbf p - \mathbf n|^2 - r^2}.
\end{multline*}
Simplifying this expression, we obtain the formula for $\rho$, which completes the proof.
\end{proof}

Using Lemma~\ref{lem_horo},
it is now straightforward to prove the formula \eqref{eq:rho} for $\rho$ in Theorem~\ref{thm:horocycle_new}.
Suppose that there is a Ford horosphere in $\mathbb{H}^{n+1}$ based at $z\in K$ with radius $1/(2\HH_{K}(z))$.
From ($\Phi$-i), 
we see that this horosphere is mapped by the extended map $\overline{\varphi}$
to a horosphere with radius $C/(2\HH_{K}(z))$ based at $\varphi(z)$.
Then, after we further apply $\mathrm{Ref}_{\mathbf P}$, we are in the situation described in Lemma~\ref{lem_horo} with $\mathbf{p} = \varphi(z)$ and $r = C/(2\HH_{K}(z))$. 
We use Lemma~\ref{lem_horo} with this and simplify using the condition ($\Phi$-ii) to obtain \eqref{eq:rho}.

It remains to show the last assertion in Theorem~\ref{thm:horocycle_new},
namely, two horospheres on $\mathbb F$ 
based at $z, z' \in K$
are either disjoint or tangent.
Let $z, z'$ be two distinct points of $K$ 
and write $z=\alpha/\beta$ and $z'= \alpha'/\beta'$ as before.
Since the horospheres based at $z$, $z'$ have radius $1/(2|\beta|^2)$, $1/(2|\beta'|^2)$,
the square of the distance between the centers of the horospheres based at $z$ and $z'$ is
\begin{align*}
\left( \frac{1}{2|\beta|^2} - \frac{1}{2|\beta'|^2} \right)^2 + \left| \frac{\alpha}{\beta} - \frac{\alpha'}{\beta'} \right|^2 
&= \left( \frac{1}{2|\beta|^2} + \frac{1}{2|\beta'|^2} \right)^2 + \frac{| \alpha \beta' - \alpha' \beta |^2 -1}{|\beta|^2 |\beta'|^2} \\
&\ge \left( \frac{1}{2|\beta|^2} + \frac{1}{2|\beta'|^2} \right)^2.
\end{align*}
Here, we use the fact that $| \alpha \beta' - \alpha' \beta |^2$ is at least 1. 
For the case of $n = 1$, see Figures \ref{fig:S1I}, \ref{fig:S1II} and \ref{fig:S1III}.
This completes the proof of Theorem~\ref{thm:horocycle_new}.

The horosphere based at $\Phi(z)$ with (Euclidean) radius $\rho = R/(1+2(R/C)\HH_{\mathcal{Z}}(\mathbf z) )$ is 
centered at 
\[
\mathbf c + \left( 1 - \frac{\rho}{R}\right) (\mathbf z - \mathbf c) 
= \mathbf c + \frac{ 2R \HH_{\mathcal{Z}}(\mathbf z)(\mathbf z - \mathbf c)}{C + 2R \HH_{\mathcal{Z}}(\mathbf z)},
\]
where $\mathbf c$ is the center of the sphere $\mathbf S$.
Hence, the square of the distance between the centers of the horospheres based at $\mathbf z$ and $\mathbf z'$ is 
\begin{multline*}
\left| \frac {2 R h (\mathbf z - \mathbf c)}{C + 2 R h} - \frac {2 R h (\mathbf z' - \mathbf c)}{C + 2 R h'} \right|^2 \\
= \left(\frac {2 R^2 h}{C + 2 R h} \right)^2 + \left(\frac {2R^2 h'}{C + 2 R h'}\right)^2 - \frac {8 R^2 h h' (\mathbf z - \mathbf c)\cdot (\mathbf z' - \mathbf c)}{(C + 2 R h)(C + 2 R h')}.
\end{multline*}
Here, we used 
$h = \HH_{\mathcal{Z}}(\mathbf{z})$
and
$h' = \HH_{\mathcal{Z}}(\mathbf{z}')$
to lighten the notations.
On the other hand, the square of the sum of radii of the horospheres based at $\mathbf z$ and $\mathbf z'$ is
\begin{align*}
\left(\rho_{\mathbf z} + \rho_{\mathbf z'} \right)^2 
&=\left(\frac{RC}{C + 2 R h} + \frac{RC}{C + 2 R h'}\right)^2 \\
&= \left(\frac {2 R^2 h}{C + 2 R h} \right)^2 + \left(\frac {2 R^2 h'}{C + 2 R h'}\right)^2 - \frac {8 R^2 h h'(R^2 - C^2/(2hh') ) }{(C + 2 R h)(C + 2 R h')}.
\end{align*}
Therefore, we conclude that 
\begin{equation}\label{ineq:claim}
(\mathbf z - \mathbf c)\cdot (\mathbf z' - \mathbf c) \le R^2 - \frac{C^2}{2\HH_{\mathbf S}(\mathbf z)\HH_{\mathbf S}(\mathbf z')}.
\end{equation}
Moreover, two horospheres based at $\mathbf z$ and $\mathbf z'$ are tangent if and only if the equality in \eqref{ineq:claim} holds true.


For the rest of the paper, we give an explicit description of \eqref{ineq:claim} in terms of the (Euclidean) coordinates in their ambient spaces for each of the six spaces we consider. 

\subsection{The case $\mathbf{S} = \mathbf{S}_\I^1$}

\begin{figure}
\begin{tikzpicture}[scale=4]

\draw[thick] (0,0) circle (1);

\draw[ultra thick] (1,0) node [right] {$(\frac11,\frac01)$} arc (0:0+360:{1/(1+sqrt(2))}); 
\draw[ultra thick] (0,1) node [above] {$(\frac01,\frac11)$} arc (-270:-270+360:{1/(1+sqrt(2))}); 
\draw[ultra thick] (-1,0) node [left] {$(\frac{-1}1,\frac01)$} arc (-180:-180+360:{1/(1+sqrt(2))}); 
\draw[ultra thick] (0,-1) node [below] {$(\frac01,\frac{-1}1)$} arc (-90:-90+360:{1/(1+sqrt(2))}); 

\draw (1,0) arc (0-90:-270+90:1);
\draw (0,1) arc (-270-90+360:-180+90:1);
\draw (-1,0) arc (-180-90+360:-90+90:1);
\draw (0,-1) arc (-90-90+360:0+90:1);

\draw[ultra thick] (4/5,3/5) node [right=2pt,yshift=-1pt] {$(\frac 45, \frac 35)$} arc (36.87:36.87+360:{1/(1+5*sqrt(2))}); 
\draw[ultra thick] (3/5,4/5) node [above=4pt,xshift=3pt] {$(\frac 35, \frac 45)$} arc (53.13:53.13+360:{1/(1+5*sqrt(2))}); 

\draw (1,0) arc (0-90:36.87+90-360:1/3);
\draw (4/5,3/5) arc (36.87-90:53.13+90-360:1/7);
\draw (3/5,4/5) arc (53.13-90:-270+90:1/3);

\draw[ultra thick] (12/13,5/13) node [right=3pt, yshift=-3pt] {$(\frac{12}{13},\frac 5{13})$
} arc (22.62:22.62+360:{1/(1+13*sqrt(2))}); 
\draw[ultra thick] (15/17,8/17) node [right=2pt] {$(\frac{15}{17},\frac{8}{17})$
} arc (28.07:28.07+360:{1/(1+17*sqrt(2))}); 

\draw[ultra thick] (4/5,-3/5) node [right] {$(\frac 45, \frac{-3}5)$} arc (-36.87:-36.87+360:{1/(1+5*sqrt(2))}); 
\draw[ultra thick] (3/5,-4/5) node [right,yshift=-5pt] {$(\frac 35, \frac{-4}5)$} arc (-53.13:-53.13+360:{1/(1+5*sqrt(2))}); 

\draw (1,0) arc (0+90:-36.87-90+360:1/3);
\draw (4/5,-3/5) arc (-36.87+90:-53.13-90+360:1/7);
\draw (3/5,-4/5) arc (-53.13+90:-90-90+360:1/3);

\draw (1,0) arc (0-90:22.62+90-360:1/5);
\draw (12/13,5/13) arc (22.62-90:28.07+90-360:1/21);
\draw (15/17,8/17) arc (28.07-90:36.87+90-360:1/13);

\draw[ultra thick] (21/29,20/29) node [right,yshift=2pt] {$(\frac{21}{29},\frac{20}{29})$} arc (43.60:43.60+360:{1/(1+29*sqrt(2))});
\draw[ultra thick] (20/29,21/29) node [above, xshift=12pt] {$(\frac{20}{29},\frac{21}{29})$
} arc (46.40:46.40+360:{1/(1+29*sqrt(2))}); 

\draw (4/5,3/5) arc (36.87-90:43.60+90-360:1/17);
\draw (21/29,20/29) arc (43.60-90:46.40+90-360:1/41);
\draw (20/29,21/29) arc (46.40-90:53.13+90-360:1/17);

\end{tikzpicture}

\begin{tikzpicture}[scale=2]
\draw[thick] (-1.7, 0) -- (3.7, 0);

\draw[ultra thick] (-1.7,1.4142) -- (3.7,1.4142); 
\draw[ultra thick] (-1,0) node [below] {$0 = \Phi^{-1} ( \frac {-1}1,\frac 01)$} arc (-90:270:0.7071); 
\draw[ultra thick] (1,0) node [below] {$\sqrt 2 = \Phi^{-1} (\frac 11,\frac 01)$} arc (-90:270:0.70711); 
\draw[ultra thick] (3,0) node [below] {$2\sqrt 2 = \Phi^{-1} (\frac 35, \frac 45)$} arc (-90:270:0.7071); 

\draw (-1,1.6) -- (-1,0);
\draw (1,1.6) -- (1,0);
\draw (3,1.6) -- (3,0);

\draw[ultra thick] (0,0) node [below=15pt] {$\frac{1}{\sqrt 2} = \Phi^{-1}(\frac 01, \frac {-1}1)$} arc (-90:270:0.35355); 
\draw[ultra thick] (2,0) node [below=15pt] {$\frac{3}{\sqrt 2} = \Phi^{-1}(\frac 45, \frac 35)$} arc (-90:270:0.35355); 

\draw (-1,0) arc (180:0:1/2);
\draw (0,0) arc (180:0:1/2);
\draw (1,0) arc (180:0:1/2);
\draw (2,0) arc (180:0:1/2);

\draw[ultra thick] (1/2,0) 
arc (-90:270:0.08839); 
\draw[ultra thick] (1/3,0) 
arc (-90:270:0.07857); 

\draw[ultra thick] (3/2,0) 
arc (-90:270:0.08839); 
\draw[ultra thick] (5/3,0) 
arc (-90:270:0.07857); 

\draw (0,0) arc (180:0:1/6);
\draw (1/3,0) arc (180:0:1/12);
\draw (1/2,0) arc (180:0:1/4);

\draw (1,0) arc (180:0:1/4);
\draw (3/2,0) arc (180:0:1/12);
\draw (5/3,0) arc (180:0:1/6);

\draw[ultra thick] (7/3,0) 
arc (-90:270:0.07857); 
\draw[ultra thick] (5/2,0) 
arc (-90:270:0.08839); 

\draw (2,0) arc (180:0:1/6);
\draw (7/3,0) arc (180:0:1/12);
\draw (5/2,0) arc (180:0:1/4);
\end{tikzpicture}

\caption{Horocycles for the circle $\mathbf S^1_\I$. The radius of the horocycle at $(\frac ac, \frac bc)$ is $1/(1+\sqrt 2 c)$ on $\mathbf S^1_\I$ (above). 
The projected horocycle at $\Phi^{-1}(\frac ac, \frac bc)$ on $\mathbb R$ has radius $1/(2(c-b))$ (below).}\label{fig:S1I}
\end{figure}

A rational point $\mathbf z = (\frac ac, \frac bc)$ on $\mathbf S^1_\I$ has height $\HH(\mathbf z) = c$, the Ford horocycle based at $\mathbf z$ has radius $1/(1+\sqrt 2 c)$.
For $\mathbf z = (\frac ac, \frac bc)$ and $\mathbf z' = (\frac {a'}{c'}, \frac {b'}{c'})$,
the inequality \eqref{ineq:claim} implies that 
\begin{align*}
(\mathbf z - \mathbf c)\cdot (\mathbf z' - \mathbf c)
&= \frac{aa' + bb'}{cc'} 
\le 1 - \frac{1}{cc'}.
\end{align*}
Therefore, we have
$aa' + bb' - cc' \le -1$
and two horocycles based at $\mathbf z = (\frac ac, \frac bc)$ and $\mathbf z' = (\frac {a'}{c'}, \frac {b'}{c'})$ are tangent if and only if 
$aa' + bb' - cc' = -1$.
In Figure~\ref{fig:S1I}, we give pictures of horospheres based at the following rational points on $\mathbf S^1_\I$ and corresponding points on $\mathbb R$:
\begin{align*}
 &  \left( \tfrac {-1}1,\tfrac 01\right)=\Phi\left(0\right), &
 & \left(\tfrac 01, \tfrac {-1}1\right)=\Phi\left(\tfrac{1}{\sqrt 2}\right), &
 & \left(\tfrac 35, \tfrac {-4}5\right)=\Phi\left(\tfrac{2\sqrt2}{3}\right), \\
 & \left(\tfrac 45, \tfrac {-3}5\right)=\Phi\left(\tfrac{3}{2\sqrt 2}\right), &
 & \left(\tfrac 11,\tfrac 01\right)=\Phi\left(\sqrt 2\right), &
 & \left(\tfrac{12}{13}, \tfrac{5}{13}\right)=\Phi\left(\tfrac{5}{2\sqrt 2}\right), \\
 & \left(\tfrac{15}{17}, \tfrac{8}{17}\right)=\Phi\left(\tfrac{4\sqrt2}{3}\right), &
 & \left(\tfrac 45, \tfrac 35\right)=\Phi\left(\tfrac{3}{\sqrt 2}\right), &
 & \left(\tfrac{21}{29}, \tfrac{20}{29}\right)=\Phi\left(\tfrac{5\sqrt2}{3}\right), \\
 & \left(\tfrac{20}{29}, \tfrac{21}{29}\right)=\Phi\left(\tfrac{7}{2\sqrt 2}\right), &
 & \left(\tfrac 35, \tfrac 45\right)=\Phi\left(2\sqrt 2\right), & 
 & \left(\tfrac 01, \tfrac 11\right)=\Phi(\infty).
\end{align*}
Note that the height of $\Phi^{-1} \left( \frac a{c}, \frac b{c} \right)$ is $c-b$.

\subsection{The case $\mathbf{S} = \mathbf{S}_\II^1$}

\begin{figure}
\begin{tikzpicture}[scale=2.8284]

\draw[thick]
(0,0) circle ({sqrt(2)});

\draw[ultra thick] (1,1) node [above right] {$(\frac11,\frac11)$} arc (45:45+360:{2-sqrt(2)}); 
\draw[ultra thick] (-1,1) node [above left] {$(\frac{-1}1,\frac11)$} arc (135:135+360:{2-sqrt(2)}); 
\draw[ultra thick] (1,-1) node [below right] {$(\frac11,\frac{-1}1)$} arc (-45:-45+360:{2-sqrt(2)}); 
\draw[ultra thick] (-1,-1) node [below left] {$(\frac{-1}1,\frac{-1}1)$} arc (-135:-135+360:{2-sqrt(2)}); 

\draw (1,1) arc (45-90:135+90-360:{sqrt(2)});
\draw (-1,1) arc (135-90:-135+90:{sqrt(2)});
\draw (-1,-1) arc (-135-90:-45+90-360:{sqrt(2)});
\draw (1,-1) arc (-45-90:45+90-360:{sqrt(2)});



\draw[ultra thick] (1/5,-7/5) node [below right] {$(\frac 15, \frac{-7}5)$} arc (-81.87:-81.87+360:{(10-sqrt(2))/49}); 
\draw[ultra thick] (-1/5,-7/5) node [below left] {$(\frac{-1}5, \frac{-7}5)$} arc (-98.13:-98.13+360:{(10-sqrt(2))/49}); 

\draw (-1,-1) arc (-135-90:-98.13+90-360:{sqrt(2)/3});
\draw (-1/5,-7/5) arc (-98.13-90:-81.87+90-360:{sqrt(2)/7});
\draw (1/5,-7/5) arc (-81.87-90:-45+90-360:{sqrt(2)/3});

\draw[ultra thick]
(7/5,1/5) node [right, yshift=8pt] {$(\frac 75, \frac 15)$} arc (8.13:8.13+360:{(10-sqrt(2))/49}); 
\draw[ultra thick]
(7/5,-1/5) node [right, yshift=-5pt] {$(\frac 75, \frac{-1}5)$} arc (-8.13:-8.13+360:{(10-sqrt(2))/49}); 

\draw (1,-1) arc (-45-90:-8.13+90-360:{sqrt(2)/3});
\draw (7/5,-1/5) arc (-8.13-90:8.13+90-360:{sqrt(2)/7});
\draw (7/5,1/5) arc (8.13-90:45+90-360:{sqrt(2)/3});

\draw[ultra thick]
(17/13,-7/13) node [below right] {$(\frac{17}{13},\frac{-7}{13})$} arc (-22.38:-22.38+360:{(26-sqrt(2))/337}); 
\draw[ultra thick]
(23/17,-7/17) node [right, yshift=-4pt] {$(\frac{23}{17},\frac{-7}{17})$} arc (-16.93:-16.93+360:{(34-sqrt(2))/577}); 

\draw (1,-1) arc (-45-90:-22.38+90-360:{sqrt(2)/5});
\draw (17/13,-7/13) arc (-22.38-90:-16.93+90-360:{sqrt(2)/21});
\draw (23/17,-7/17) arc (-16.93-90:-8.13+90-360:{sqrt(2)/13});

\draw[ultra thick]
(41/29,1/29) node [right, yshift=6pt] {$(\frac{41}{29},\frac1{29})$
} arc (1.397:1.397+360:{(58-sqrt(2))/1681});
\draw[ultra thick]
(41/29,-1/29) node [right, yshift=-4pt] {$(\frac{41}{29}, \frac{-1}{29})$
} arc (-1.397:-1.397+360:{(58-sqrt(2))/1681}); 

\draw (7/5,-1/5) arc (-8.13-90:-1.397+90-360:{sqrt(2)/17});
\draw (41/29,-1/29) arc (-1.397-90:1.397+90-360:{sqrt(2)/41});
\draw (41/29,1/29) arc (1.397-90:8.13+90-360:{sqrt(2)/17});

\end{tikzpicture}

\begin{tikzpicture}[scale=2]

\draw[thick] (-1.7, 0) -- (3.7, 0);

\draw[ultra thick] (-1.7,1.4142) -- (3.7,1.4142); 
\draw[ultra thick] (-1,0) node [below] {$0 = \Phi^{-1}  (\frac{-1}{1},\frac 11)
$} arc (-90:270:0.7071); 
\draw[ultra thick] (1,0) node [below] {$\sqrt 2 = \Phi^{-1} (\frac 11,\frac{-1}1)
$} arc (-90:270:0.70711); 
\draw[ultra thick] (3,0) node [below] {$2\sqrt 2 = \Phi^{-1} (\frac 75,\frac 15)$} arc (-90:270:0.7071); 

\draw (-1,1.6) -- (-1,0);
\draw (1,1.6) -- (1,0);
\draw (3,1.6) -- (3,0);

\draw[ultra thick] (0,0) node [below=15pt] {$\frac 1{\sqrt 2} =\Phi^{-1} (\frac{-1}{1},\frac{-1}{1})
$} arc (-90:270:0.35355); 
\draw[ultra thick] (2,0) node [below=15pt] {$\frac 3{\sqrt 2} =\Phi^{-1} (\frac 75,\frac{-1}5)
$} arc (-90:270:0.35355); 

\draw (-1,0) arc (180:0:1/2);
\draw (0,0) arc (180:0:1/2);
\draw (1,0) arc (180:0:1/2);
\draw (2,0) arc (180:0:1/2);

\draw[ultra thick] (1/2,0) node [below] {
} arc (-90:270:0.08839); 
\draw[ultra thick] (1/3,0) node [below] {
} arc (-90:270:0.07857); 

\draw[ultra thick] (3/2,0) node [below] {
} arc (-90:270:0.08839); 
\draw[ultra thick] (5/3,0) node [below] {
} arc (-90:270:0.07857); 

\draw (0,0) arc (180:0:1/6);
\draw (1/3,0) arc (180:0:1/12);
\draw (1/2,0) arc (180:0:1/4);

\draw (1,0) arc (180:0:1/4);
\draw (3/2,0) arc (180:0:1/12);
\draw (5/3,0) arc (180:0:1/6);

\draw[ultra thick] (7/3,0) node [below] {
} arc (-90:270:0.07857); 
\draw[ultra thick] (5/2,0) node [below] {
} arc (-90:270:0.08839); 

\draw (2,0) arc (180:0:1/6);
\draw (7/3,0) arc (180:0:1/12);
\draw (5/2,0) arc (180:0:1/4);
\end{tikzpicture}

\caption{Horocycles for the circle $\mathbf S^1_\II$. 
The radius of the horocycle at $(\frac ac, \frac bc)$ is $\sqrt 2/(1+\sqrt 2 c)$ on $\mathbf S^1_\II$ (above).
The projected horocycle at $\Phi^{-1}(\frac ac, \frac bc)$ on $\mathbb R$  
has radius $1/(2c-a-b)$ (below).
}\label{fig:S1II}
\end{figure}

A rational point $\mathbf z = (\frac ac, \frac bc)$ on $\mathbf S^1_\II$ has height $\HH(\mathbf z) = c$, the Ford horocycle based at $\mathbf z$ has radius $\sqrt 2/(1+\sqrt 2 c)$.
For $\mathbf z = (\frac ac, \frac bc)$ and $\mathbf z' = (\frac {a'}{c'}, \frac {b'}{c'})$,
the inequality \eqref{ineq:claim} implies that 
\begin{align*}
(\mathbf z - \mathbf c)\cdot (\mathbf z' - \mathbf c)
&= \frac{aa' + bb'}{cc'} 
\le 2 - \frac{2}{cc'}.
\end{align*}
Therefore, we have
$aa' + bb' - 2cc' \le - 2$
and two horocycles based at $\mathbf z = (\frac ac, \frac bc)$ and $\mathbf z' = (\frac {a'}{c'}, \frac {b'}{c'})$ are tangent if and only if 
$aa' + bb' - cc' = -2$.
In Figure~\ref{fig:S1II}, we give pictures of horospheres based at the following rational points on $\mathbf S^1_\II$ and corresponding points on $\mathbb R$:
\begin{align*}
 &  \left( \tfrac {-1}1,\tfrac 11\right)=\Phi\left(0\right), &
 & \left(\tfrac {-1}1, \tfrac {-1}1\right)=\Phi\left(\tfrac{1}{\sqrt 2}\right), &
 & \left(\tfrac {-1}5, \tfrac {-7}5\right)=\Phi\left(\tfrac{2\sqrt2}{3}\right), \\
 & \left(\tfrac 15, \tfrac {-7}5\right)=\Phi\left(\tfrac{3}{2\sqrt 2}\right), &
 & \left(\tfrac 11,\tfrac {-1}1\right)=\Phi\left(\sqrt 2\right), &
& \left(\tfrac{17}{13}, \tfrac{-7}{13}\right)=\Phi\left(\tfrac{5}{2\sqrt 2}\right) , \\
 & \left(\tfrac{23}{17}, \tfrac{-7}{17}\right)=\Phi\left(\tfrac{4\sqrt2}{3}\right), &
 & \left(\tfrac 75, \tfrac {-1}5\right)=\Phi\left(\tfrac{3}{\sqrt 2}\right), &
 & \left(\tfrac{41}{29}, \tfrac{-1}{29}\right)=\Phi\left(\tfrac{5\sqrt2}{3}\right), \\
 & \left(\tfrac{41}{29}, \tfrac{1}{29}\right)=\Phi\left(\tfrac{7}{2\sqrt 2}\right), &
 & \left(\tfrac 75, \tfrac 15\right)=\Phi\left(2\sqrt 2\right), &
 & \left(\tfrac 11, \tfrac 11\right)=\Phi(\infty).
\end{align*}
Note that the height of $\Phi^{-1} \left( \frac a{c}, \frac b{c} \right)$ is $c-(a+b)/2$.

\subsection{The case $\mathbf{S} = \mathbf{S}_\III^1$}

\begin{figure}
\begin{tikzpicture}[scale=4]

\draw[thick] (0,0) circle (1);


\draw[ultra thick] (0,1) node [above] {$(\frac01,\frac01,\frac11)$} arc (90:90+360:{1/(1+2/sqrt(3)}); 
\draw[ultra thick] ({-sqrt(3)/2},-1/2) node [below left] {$(\frac11,\frac01,\frac01)$} arc (-150:-150+360:{1/(1+2/sqrt(3)}); 
\draw[ultra thick] ({sqrt(3)/2},-1/2) node [below right] {$(\frac01,\frac11,\frac01)$} arc (-30:-30+360:{1/(1+2/sqrt(3)}); 

\draw (0,1) arc (90-90:-150+90:{sqrt(3)});
\draw ({-sqrt(3)/2},-1/2) arc (-150-90:-30+90-360:{sqrt(3)});
\draw ({sqrt(3)/2},-1/2) arc (-30-90:90+90-360:{sqrt(3)});

\draw[ultra thick] (0,-1) node [below] {$(\frac 23, \frac 23, \frac{-1}3)$} arc (-90:-90+360:{1/(1+6/sqrt(3)}); 
\draw[ultra thick] ({sqrt(3)/2},1/2) node [above right] {$(\frac{-1}3, \frac 23, \frac 23)$} arc (30:30+360:{1/(1+6/sqrt(3)}); 
\draw[ultra thick] ({-sqrt(3)/2},1/2) node [above left] {$(\frac 23, \frac{-1}3, \frac 23)$} arc (150:150+360:{1/(1+6/sqrt(3)}); 

\draw (0,1) arc (90-90:150+90-360:{1/sqrt(3)});
\draw ({-sqrt(3)/2},1/2) arc (150-90:-150+90:{1/sqrt(3)});
\draw ({-sqrt(3)/2},-1/2) arc (-150-90:-90+90-360:{1/sqrt(3)});
\draw (0,-1) arc (-90-90:-30+90-360:{1/sqrt(3)});
\draw ({sqrt(3)/2},-1/2) arc (-30-90:30+90-360:{1/sqrt(3)});
\draw ({sqrt(3)/2},1/2) arc (30-90:90+90-360:{1/sqrt(3)});

\draw[ultra thick] ({-3*sqrt(3)/14},-13/14) node [below left] {$(\frac67,\frac37,\frac{-2}7)$} arc (-111.79:-111.79+360:{1/(1+14/sqrt(3)}); 
\draw[ultra thick] ({3*sqrt(3)/14},-13/14) node [below right] {$(\frac37,\frac67,\frac{-2}7)$} arc (-68.21:-68.21+360:{1/(1+14/sqrt(3)}); 
\draw[ultra thick] ({4*sqrt(3)/7},1/7) node [right] {$(\frac{-2}7,\frac67,\frac37)$} arc (8.21:8.21+360:{1/(1+14/sqrt(3)}); 
\draw[ultra thick] ({5*sqrt(3)/14},11/14) node [above right] {$(\frac{-2}7,\frac37,\frac67)$} arc (51.79:51.79+360:{1/(1+14/sqrt(3)}); 

\draw ({-sqrt(3)/2},-1/2) arc (-150-90:-111.79+90-360:{sqrt(3)/5});
\draw ({-3*sqrt(3)/14},-13/14) arc (-111.79-90:-90+90-360:{sqrt(3)/9});
\draw (0,-1) arc (-90-90:-68.21+90-360:{sqrt(3)/9});
\draw ({3*sqrt(3)/14},-13/14) arc (-68.21-90:-30+90-360:{sqrt(3)/5});

\draw ({sqrt(3)/2},-1/2) arc (-30-90:8.21+90-360:{sqrt(3)/5});
\draw ({4*sqrt(3)/7},1/7) arc (8.21-90:30+90-360:{sqrt(3)/9});
\draw ({sqrt(3)/2},1/2) arc (30-90:51.79+90-360:{sqrt(3)/9});
\draw ({5*sqrt(3)/14},11/14) arc (51.79-90:90+90-360:{sqrt(3)/5});

\end{tikzpicture}

\begin{tikzpicture}[scale=2]

\draw[thick] (-2.6, 0) -- (2.6, 0);

\draw[ultra thick] (-2.6,1) -- (2.6,1); 
\draw[ultra thick] (-2,0) node [below] {$-2 = \Phi^{-1} (\frac{-2}7,\frac 37, \frac 67)$} 
arc (-90:270:1/2); 
\draw[ultra thick] (-1,0) node [below = 15pt] {$-1 = \Phi^{-1} (\frac{-1}3,\frac 23, \frac 23)$} arc (-90:270:1/2); 
\draw[ultra thick] (0,0) node [below] {$0 = \Phi^{-1} (\frac 01,\frac 11,\frac 01)$} arc (-90:270:1/2); 
\draw[ultra thick] (1,0) node [below = 15pt] {$1 = \Phi^{-1} (\frac 11,\frac 01, \frac 01)$}  arc (-90:270:1/2); 
\draw[ultra thick] (2,0) node [below] {$2 = \Phi^{-1} (\frac 23, \frac{-1}3, \frac23)$} 
arc (-90:270:1/2); 

\draw (-2,1.2) -- (-2,0);
\draw (-1,1.2) -- (-1,0);
\draw (0,1.2) -- (0,0);
\draw (1,1.2) -- (1,0);
\draw (2,1.2) -- (2,0);

\draw[ultra thick] (-1/2,0) 
arc (-90:270:1/8); 
\draw[ultra thick] (1/2,0) 
arc (-90:270:1/8); 

\draw (-2,0) arc (180:0:1/2);
\draw (-1,0) arc (180:0:1/2);
\draw (0,0) arc (180:0:1/2);
\draw (1,0) arc (180:0:1/2);

\draw[ultra thick] (1/3,0) node [below] {
} arc (-90:270:1/18); 
\draw[ultra thick] (2/3,0) node [below] {
} arc (-90:270:1/18); 

\draw (-1,0) arc (180:0:1/4);
\draw (-1/2,0) arc (180:0:1/4);
\draw (0,0) arc (180:0:1/4);
\draw (1/2,0) arc (180:0:1/4);

\draw (0,0) arc (180:0:1/6);
\draw (1/3,0) arc (180:0:1/12);
\draw (1/2,0) arc (180:0:1/12);
\draw (2/3,0) arc (180:0:1/6);

\end{tikzpicture}

\caption{Horocycles for the circle $\mathbf S^1_\III$. 
The radius of the horocycle at $(\frac ad, \frac bd, \frac cd)$ is $\sqrt 2/(\sqrt 3+2 d)$ on $\mathbf S^1_\III$ (above).
The projected horocycle at $\Phi^{-1}(\frac ad, \frac bd, \frac cd)$ on $\mathbb R$ has radius $1/(2(a+b))$ (below).
}\label{fig:S1III}
\end{figure}

A rational point $\mathbf z = (\frac a{a+b+c}, \frac b{a+b+c}, \frac c{a+b+c})$ on $\mathbf S^1_\III$ has height $\HH(\mathbf z) = a+b+c$, the Ford horocycle based at $\mathbf z$ has radius $$\rho = \frac{\sqrt 2}{\sqrt 3 + 2 (a+b+c)}.$$
Since $\mathbf c = (\frac 13, \frac 13, \frac 13)$, for $\mathbf z = (\frac a{a+b+c}, \frac b{a+b+c}, \frac c{a+b+c})$ and $\mathbf z' = (\frac {a'}{a'+b'+c'}, \frac {b'}{a'+b'+c'}, \frac {c'}{a'+b'+c'})$ we have
\begin{align*}
(\mathbf z - \mathbf c)\cdot (\mathbf z' - \mathbf c) &= \mathbf z \cdot \mathbf z' - \mathbf z \cdot \mathbf c - \mathbf z' \cdot \mathbf c +\mathbf c \cdot \mathbf c \\
&= \frac{aa' + bb'+cc'}{(a+b+c)(a'+b'+c')} - \frac 13 - \frac 13 + \frac 13 \\ 
&= \frac{(a+b+c)(a'+b'+c')}{(a+b+c)(a'+b'+c')} - \frac{ab'+ac'+ba'+bc'+ca'+cb'}{(a+b+c)(a'+b'+c')} - \frac 13\\
&= \frac 23 - \frac{ab'+ac'+ba'+bc'+ca'+cb'}{(a+b+c)(a'+b'+c')}.
\end{align*}
The inequality \eqref{ineq:claim} implies that
$$ab'+ac'+ba'+bc'+ca'+cb' \ge 1.$$
So we conclude that two horocycles based at $\mathbf z = (\frac a{a+b+c}, \frac b{a+b+c}, \frac c{a+b+c})$ and $\mathbf z' = (\frac {a'}{a'+b'+c'}, \frac {b'}{a'+b'+c'}, \frac {c'}{a'+b'+c'})$ are tangent if and only if 
$ab'+ac'+ba'+bc'+ca'+cb' = 1$.
In Figure~\ref{fig:S1III}, we give pictures of horospheres based at the following rational points on $\mathbf S^1_\III$ and corresponding points on $\mathbb R$:
\begin{align*}
&\left( \tfrac {-2}7,\tfrac 37, \tfrac 67 \right)=\Phi\left(-2 \right), &
&  \left( \tfrac {-1}3,\tfrac 23, \tfrac 23 \right)=\Phi\left(-1 \right), &
& \left(\tfrac {-2}7, \tfrac 67, \tfrac 37 \right)=\Phi\left( -\tfrac{1}{2}\right), \\
&  \left( \tfrac {0}1,\tfrac 11, \tfrac 01 \right)=\Phi\left( 0 \right), &
& \left(\tfrac {3}7, \tfrac 67, \tfrac {-2}7\right)=\Phi\left( \tfrac{1}{3}\right), &
& \left(\tfrac 23, \tfrac 23, \tfrac {-1}3\right)=\Phi\left( \tfrac{1}{2}\right) , \\
& \left(\tfrac 67, \tfrac 37, \tfrac {-2}7\right)=\Phi\left( \tfrac{2}{3}\right), &
&  \left( \tfrac {1}1,\tfrac 01, \tfrac 01 \right)=\Phi\left( 1 \right), &
&  \left( \tfrac 23,\tfrac {-1}3, \tfrac 23 \right)=\Phi\left( 2 \right), \\
& \left( \tfrac 01,\tfrac 01, \tfrac 11\right)=\Phi(\infty).
\end{align*}
Note that the height of $\Phi^{-1} \left( \frac a{a+b+c}, \frac b{a+b+c}, \frac c{a+b+c} \right)$ is $a+b$.





















\subsection{The case $\mathbf{S} = \mathbf{S}_\I^2$}

\begin{figure}
\begin{tikzpicture}[scale=.07]


	\draw[thick] (1.5,-34.5)--(48,-55.5)--(83,-54)--(55,-2)--(1.5,-34.5)--(33.5,-83.5)--(83,-54);
	\draw[thick] (55,-2) --(48,-55.5)--(33.5,-83.5);
	
	\draw[dashed] (1.5,-34.5)--(41,-37)--(83,-54);
	\draw[dashed] (55,-2) --(41,-37)--(33.5,-83.5);
	
	\draw[fill] (1.5,-34.5) circle (0.70711) node [left] {$(\frac 11,\frac 01,\frac 01)$};
	\draw[fill] (48,-55.5) circle (0.70711) node [below right] {$(\frac 01,\frac 11,\frac 01)$};
	\draw[fill] (83,-54) circle (0.70711) node [right] {$(\frac{-1}1,\frac 01, \frac 01)$};
	\draw[fill] (41,-37) circle (0.70711) node [above left] {$(\frac 01, \frac{-1}1,\frac 01)$};
	\draw[fill] (55,-2) circle (0.70711) node [right] {$(\frac 01,\frac 01,\frac 11)$};
	\draw[fill] (33.5,-83.5) circle (0.70711) node [below] {$(\frac 01,\frac 01,\frac{-1}1)$};
\end{tikzpicture}

\begin{tikzpicture}[scale=4.5]
\draw[fill] (0,0) circle (0.01) node[left] {$0 = \Phi^{-1}(\frac 01,\frac{-1}1,\frac 01)$};   
\draw[fill] (1,0) circle (0.01) node[right] {$1 = \Phi^{-1}(\frac11,\frac01,\frac01)$};   
\draw[fill] (0,1) circle (0.01) node[left] {$\mathrm{i} = \Phi^{-1} (\frac{-1}1,\frac01,\frac01)$};   
\draw[fill] (1,1) circle (0.01) node[right] {$1+\mathrm{i} = \Phi^{-1}(\frac01,\frac11,\frac01)$};  

\draw[fill] (.5,.5) circle (0.01) node[right] {$\frac{1}{1-\mathrm{i}} = \Phi^{-1}(\frac01,\frac01,\frac{-1}1)$};

\draw (0,0) -- (1,0) -- (1,1) -- (0,1) -- (0,0);
\draw (0,0) -- (1,1);
\draw (0,1) -- (1,0);
\end{tikzpicture}

\caption{
These graphs show how horospheres at $\mathbf{S}$ or $\mathbb{C}$ are tangent to one another. 
When two vertices are connected, the corresponding horosphere are tangent to one another. 
The radius of horospheres based at $(\frac ad, \frac bd, \frac cd)$ and $\Phi^{-1}(\frac ad, \frac bd, \frac cd)$ are $1/(1+\sqrt2 c)$ (above) and $1/(2(d-c))$ (below) respectively.
}\label{fig:S2I}
\end{figure}

A rational point $\mathbf z = (\frac ad, \frac bd, \frac cd)$ on $\mathbf S^2_\I$ has height $\HH(\mathbf z) = d$, the Ford horosphere based at $\mathbf z$ has radius $1/(1+\sqrt 2 d)$.
For  $\mathbf z = (\frac ad, \frac bd, \frac cd)$ and $\mathbf z' = (\frac {a'}{d'}, \frac {b'}{d'}, \frac {c'}{d'})$,
the inequality \eqref{ineq:claim} implies that 
\begin{align*}
(\mathbf z - \mathbf c)\cdot (\mathbf z' - \mathbf c)
&= \frac{aa' + bb'+cc'}{dd'} 
\le 1 - \frac{1}{dd'}.
\end{align*}
Therefore, we have
$aa' + bb' + cc' - dd'\le -1$
and two horospheres based at $\mathbf z = (\frac ad, \frac bd, \frac cd)$ and $\mathbf z' = (\frac {a'}{d'}, \frac {b'}{d'}, \frac {c'}{d'})$ are tangent if and only if 
$aa' + bb' + cc' - dd'= -1$.
In Figure~\ref{fig:S2I}, we give diagrams of horospheres based at six rational points on $\mathbf S^2_\I$ and corresponding points on $\mathbb C$:
\begin{align*}
&\left(\tfrac11,\tfrac01,\tfrac01\right) = \Phi(1), & &\left(\tfrac01,\tfrac11,\tfrac01\right) = \Phi(1+\mathrm{i}), & &\left(\tfrac01,\tfrac01,\tfrac11\right) = \Phi(\infty),\\
&\left(\tfrac{-1}1,\tfrac01,\tfrac 01\right) = \Phi(\mathrm{i}),& &\left(\tfrac 01,\tfrac{-1}1,\tfrac 01\right)= \Phi(0), & &\left(\tfrac01,\tfrac01,\tfrac{-1}1\right)= \Phi \left(\tfrac{1}{1-\mathrm{i}}\right).
\end{align*}
Note that the height of $\Phi^{-1} \left( \frac a{d}, \frac b{d}, \frac c{d} \right)$ is $d-c$.


\subsection{The case $\mathbf{S} = \mathbf{S}_\II^2$}

\begin{figure}
\begin{tikzpicture}[scale=.08]
	\draw[thick] (1.5,-34.5)--(28.5,-54.5)--(76,-61.5)--(81.5,-47.5);
	\draw[thick] (1.5,-34.5)--(29,-0)--(60.5,-14.5)--(76,-61.5)--(57,-80)--(16.5,-75.5)--(1.5,-34.5);
	\draw[thick] (16.5,-75.5)--(28.5,-54.5)--(60.5,-14.5)--(69,-9);
	\draw[thick] (29,-0)--(69,-9)--(81.5,-47.5);
	
	\draw[dashed] (1.5,-34.5)--(18.5,-26)--(53.5,-33.5)--(81.5,-47.5);
	\draw[dashed] (29,-0)--(18.5,-26)--(31.5,-60.5)--(57,-80);
	\draw[dashed] (69,-9)--(53.5,-33.5)--(31.5,-60.5)--(16.5,-75.5);
	\draw[dashed] (57,-80)--(81.5,-47.5);
	
	\draw[fill] (1.5,-34.5) circle (0.70711) node [left] {$(\frac01,\frac{-1}1,\frac11)$};
	\draw[fill] (28.5,-54.5) circle (0.70711) node [left=2pt,yshift=-2pt] {$(\frac11,\frac01,\frac11)$};
	\draw[fill] (76,-61.5) circle (0.70711) node [right] {$(\frac11,\frac11,\frac01)$};
	\draw[fill] (81.5,-47.5) circle (0.70711) node [right] {$(\frac01,\frac11,\frac{-1}1)$};
	\draw[fill] (53.5,-33.5) circle (0.70711) node [below=10pt,xshift=9pt] {$(\frac{-1}1,\frac01,\frac{-1}1)$};
	\draw[fill] (18.5,-26) circle (0.70711) node [above right, yshift=-4pt] {$(\frac{-1}1,\frac{-1}1,\frac01)$};
	
	\draw[fill] (29,-0) circle (0.70711) node [above] {$(\frac{-1}1,\frac01,\frac11)$};
	\draw[fill] (60.5,-14.5) circle (0.70711) node [left=3pt, yshift=-3pt] {$(\frac01,\frac11,\frac11)$};
	\draw[fill] (69,-9) circle (0.70711) node [right] {$(\frac{-1}1,\frac11,\frac01)$};

	\draw[fill] (16.5,-75.5) circle (0.70711) node [below] {$(\frac11,\frac{-1}1,\frac01)$};
	\draw[fill] (31.5,-60.5) circle (0.70711) node [right=3pt,yshift=-2pt] {$(\frac01,\frac{-1}1,\frac{-1}1)$};
	\draw[fill] (57,-80) circle (0.70711) node [below] {$(\frac11,\frac01,\frac{-1}1)$};
\end{tikzpicture}

\begin{tikzpicture}[scale=5]
	\draw[fill] (0,0) circle (0.01) node[left] {$\frac 01 = \Phi^{-1} (\frac11,\frac01,\frac11)$};   
	\draw[fill] (1,0) circle (0.01) node[right] {$\frac 11 = \Phi^{-1} (\frac11,\frac11,\frac01)$};   
	\draw[fill] (0,{sqrt(2)}) circle (0.01) node[left] {$\omega = \Phi^{-1} (\frac{-1}1,\frac01,\frac11)$};   
	\draw[fill] (1,{sqrt(2)}) circle (0.01) node[right] {$1+\omega = \Phi^{-1} (\frac{-1}1,\frac11,\frac01)$};   

	\draw[fill] (0,{1/sqrt(2)}) circle (0.01) node[left] {$\frac{-1}{\omega}=\Phi^{-1} (\frac01,\frac{-1}1,\frac11)$};   
	\draw[fill] (1,{1/sqrt(2)}) circle (0.01) node[right] {$\frac{-1+\omega}{\omega} = \Phi^{-1} (\frac01,\frac11,\frac{-1}1)$};   

	\draw[fill] (1/3,{sqrt(2)/3}) circle (0.01) node[left=1pt,yshift=-2pt] {$\frac{1}{1-\omega}= \Phi^{-1} (\frac11,\frac{-1}1,\frac01)$};   
	\draw[fill] (2/3,{sqrt(2)/3}) circle (0.01) node[right] {$\frac{\omega}{1+\omega} = \Phi^{-1} (\frac11,\frac01,\frac{-1}1)$};    
	\draw[fill] (1/3,{2*sqrt(2)/3}) circle (0.01) node[left=1pt,yshift=2pt] {$\frac{-1+\omega}{1+\omega} =\Phi^{-1} (\frac{-1}1,\frac{-1}1,\frac01)$};   
	\draw[fill] (2/3,{2*sqrt(2)/3}) circle (0.01) node[right,yshift=3pt] {$\frac{2}{1-\omega} = \Phi^{-1} (\frac{-1}1,\frac01,\frac{-1}1)$};    

	\draw[fill] (1/2,{sqrt(2)/2}) circle (0.01) node[above,xshift=0pt] {$\frac{1+\omega}{2} = \Phi^{-1} (\frac01,\frac{-1}1,\frac{-1}1)$};    

	\draw (0,0)--(1,0);
	\draw (0,0)--(0,{sqrt(2)});
	\draw (0,{sqrt(2)})--(1,{sqrt(2)});
	\draw (1,0)--(1,{sqrt(2)});
	\draw (0,0)--(1,{sqrt(2)});
	\draw (1,0)--(0,{sqrt(2)});
	\draw (1/3,{sqrt(2)/3})--(2/3,{sqrt(2)/3});
	\draw (1/3,{2*sqrt(2)/3})--(2/3,{2*sqrt(2)/3});
	\draw (0,{sqrt(2)/2})--(1/3,{sqrt(2)/3});
	\draw (0,{sqrt(2)/2})--(1/3,{2*sqrt(2)/3});
	\draw (1,{sqrt(2)/2})--(2/3,{sqrt(2)/3});
	\draw (1,{sqrt(2)/2})--(2/3,{2*sqrt(2)/3});
\end{tikzpicture}

\caption{These graphs show how horospheres at $\mathbf{S}$ or $\mathbb{C}$ are tangent to one another. 
When two vertices are connected, the corresponding horosphere are tangent to one another.
The radius of horospheres based at $(\frac ad, \frac bd, \frac cd)$ and $\Phi^{-1}(\frac ad, \frac bd, \frac cd)$ are ${\sqrt 2}/(1+2 c)$ (above) and $1/(2(2d-b-c))$ (below) respectively. 
}\label{fig:S2II}
\end{figure}

A rational point $\mathbf z = (\frac ad, \frac bd, \frac cd)$ on $\mathbf S^2_\II$ has height $\HH(\mathbf z) = d$, the Ford horosphere based at $\mathbf z$ has radius $\sqrt 2/(1+ 2 d)$.
For  $\mathbf z = (\frac ad, \frac bd, \frac cd)$ and $\mathbf z' = (\frac {a'}{d'}, \frac {b'}{d'}, \frac {c'}{d'})$,
the inequality \eqref{ineq:claim} implies that 
\begin{align*}
(\mathbf z - \mathbf c)\cdot (\mathbf z' - \mathbf c)
&= \frac{aa' + bb'+cc'}{dd'} 
\le 2 - \frac{1}{dd'}.
\end{align*}
Therefore, we have
$aa' + bb' + cc' - 2dd'\le -1$
and two horoshperes based at $\mathbf z = (\frac ad, \frac bd, \frac cd)$ and $\mathbf z' = (\frac {a'}{d'}, \frac {b'}{d'}, \frac {c'}{d'})$ are tangent if and only if 
$aa' + bb' + cc' - 2dd'= -1$.
In Figure~\ref{fig:S2II}, we give diagrams of horospheres based at twelve rational points on $\mathbf S^2_\II$ and corresponding points on $\mathbb C$: 
\begin{align*}
&\left(\tfrac11,\tfrac11,\tfrac01\right)  = \Phi( 1), & &
\left(\tfrac11,\tfrac{-1}1,\tfrac01\right)  = \Phi\left( \tfrac {1}{1-\omega}\right), & &
\left(\tfrac{-1}1,\tfrac11,\tfrac01\right)  = \Phi( 1+\omega), \\
&\left(\tfrac{-1}1,\tfrac{-1}1,\tfrac01\right) = \Phi\left(\tfrac {-1+\omega}{1+\omega}\right),& &
\left(\tfrac11,\tfrac01,\tfrac11\right)  = \Phi( 0), & &
\left(\tfrac11,\tfrac01,\tfrac{-1}1\right)  = \Phi\left( \tfrac \omega{1+\omega}\right), \\ 
&\left(\tfrac{-1}1,\tfrac01,\tfrac11\right)  = \Phi( \omega), & &
\left(\tfrac{-1}1,\tfrac01,\tfrac{-1}1\right) = \Phi\left( \tfrac {2}{1-\omega}\right) & &
\left(\tfrac01,\tfrac11,\tfrac11\right) = \Phi( \infty), \\
&\left(\tfrac01,\tfrac11,\tfrac{-1}1\right)  = \Phi\left( \tfrac {-1+\omega}{\omega}\right), & &
\left(\tfrac01,\tfrac{-1}1,\tfrac11\right)  = \Phi\left( -\tfrac 1{\omega}\right), & &
\left(\tfrac01,\tfrac{-1}1,\tfrac{-1}1\right) = \Phi\left( \tfrac {1+\omega}{2}\right).
\end{align*}
Note that the height of $\Phi^{-1} \left( \frac a{d}, \frac b{d}, \frac c{d} \right)$ is $2d-b-c$.


\subsection{The case $\mathbf{S} = \mathbf{S}_\III^2$}

\begin{figure}
\begin{tikzpicture}[scale=.08]


	\draw[dashed] (4.5,-37.5)--(47.5,-82.5)--(64.5,-18.5);
	\draw[thick] (64.5,-18.5)--(22,-24.5)--(4.5,-37.5);
	\draw[thick] (22,-24.5)--(47.5,-82.5);
	
	\draw[dashed] (4.5,-37.5)--(64.5,-18.5);
	
	\draw[thick] (22,-24.5)--(.5,-67)--(4.5,-37.5);
	\draw[thick] (.5,-67)--(47.5,-82.5);

	\draw[thick] (22,-24.5)--(73.5,-45)--(64.5,-18.5);
	\draw[thick] (73.5,-45)--(47.5,-82.5);

	\draw[dashed] (4.5,-37.5)--(43.5,-52)--(64.5,-18.5);
	\draw[dashed] (43.5,-52)--(47.5,-82.5);

	\draw[thick] (4.5,-37.5)--(22.5,-.5)--(64.5,-18.5);
	\draw[thick] (22.5,-.5)--(22,-24.5);

	\draw[fill] (22,-24.5) circle (0.5) node [right,yshift=13pt] {$(\frac 01,\frac 01,\frac 01,\frac 11)$};
	\draw[fill] (4.5,-37.5) circle (0.5) node [left] {$(\frac 11,\frac01,\frac01,\frac01)$};
	\draw[fill] (47.5,-82.5) circle (0.5) node [right] {$(\frac01,\frac11,\frac01,\frac01)$};
	\draw[fill] (64.5,-18.5) circle (0.5) node [right] {$(\frac01,\frac01,\frac11,\frac01)$};
	
	\draw[fill] (.5,-67) circle (0.5) node [left] {$(\frac 12,\frac 12,\frac{-1}2,\frac 12)$};
	\draw[fill] (73.5,-45) circle (0.5) node [right] {$(\frac{-1}2,\frac 12,\frac 12,\frac 12)$};
	\draw[fill] (43.5,-52) circle (0.5) node [right] {$(\frac 12,\frac 12,\frac 12,\frac{-1}2)$};
	\draw[fill] (22.5,-.5) circle (0.5) node [above] {$(\frac 12,\frac{-1}2,\frac 12,\frac 12)$};
\end{tikzpicture}
\begin{tikzpicture}[scale=3]
	\draw[fill] (0,0) circle (0.015) node[left] {$0 = \Phi^{-1}(\frac11,\frac01,\frac01,\frac01)$};   
	\draw[fill] (1,0) circle (0.015) node[right] {$1 = \Phi^{-1}(\frac01,\frac11,\frac01,\frac01)$};   
	\draw[fill] (.5,.866) circle (0.015) node[above] {$1+\omega = \Phi^{-1}(\frac01,\frac01,\frac11,\frac01)$};   
	\draw[fill] (-.5,.866) circle (0.015) node[above left, xshift=20pt] {$\omega = \Phi^{-1}(\frac 12, \frac{-1}2, \frac 12, \frac 12)$};   
	\draw[fill] (1.5,.866) circle (0.015) node[above right, xshift=-20pt] {$2+\omega = \Phi^{-1}(\frac{-1}2, \frac 12, \frac 12, \frac 12)$};   
	\draw[fill] (.5,-.866) circle (0.015) node[right] {$-\omega =\Phi^{-1}(\frac 12, \frac 12, \frac{-1}2, \frac 12)$};   
	\draw[fill] (.5,.2887) circle (0.015) node[above right,xshift=-2pt] {$\frac{1}{1-\omega}= \Phi^{-1}(\frac 12, \frac 12, \frac 12, \frac{-1}2)$};   


	\draw (0,0) -- (1,0)--(.5,.866)--(0,0);
    \draw (-.5,.866)--(.5,-.866)--(1.5,.866)--(-.5,.866);
	\draw (1/2,.2887)--(1/2,.866);
	\draw (1/2,.2887) -- (0,0);
	\draw (1/2,.2887) -- (1,0);
	
\end{tikzpicture}
\caption{These graphs show how horospheres at $\mathbf{S}$ or $\mathbb{C}$ are tangent to one another.  
When two vertices are connected, the corresponding horosphere are tangent to one another.
The radius of horospheres based at $\mathbf z = (\frac a{a+b+c+d}, \frac b{a+b+c+d}, \frac c{a+b+c+d}, \frac d{a+b+c+d})$ and $\Phi^{-1}(\mathbf z)$ are ${\sqrt 3}/(2+\sqrt 6(a+b+c+d))$ (above) and $1/(2(a+b+c))$ (below) respectively. 
}\label{fig:S2III}
\end{figure}

A rational point $\mathbf z = \left(\frac{a}{a+b+c+d}, \frac{b}{a+b+c+d}, \frac{c}{a+b+c+d}, \frac{d}{a+b+c+d} \right)$ on $\mathbf S^2_\III$ has height $\HH(\mathbf z) = a+b+c+d$, the Ford horosphere based at $\mathbf z$ has radius 
$$\rho = \frac{\sqrt 3}{2 + \sqrt 6 (a+b+c+d)}.$$
Since $\mathbf c = (\frac 14, \frac 14, \frac 14, \frac 14)$, for two rational points $\mathbf z$ and $\mathbf z'$ we have
\begin{align*}
(\mathbf z - \mathbf c)\cdot (\mathbf z' - \mathbf c) &=
\mathbf z \cdot \mathbf z' - \mathbf z \cdot \mathbf c - \mathbf z' \cdot \mathbf c + \mathbf c \cdot \mathbf c \\
&= \frac{aa'+bb'+cc'+dd'}{(a+b+c+d)(a'+b'+c'+d')} - \frac 14 - \frac 14 + \frac 14 \\
&= \frac{(a+b+c+d)(a'+b'+c'+d')}{(a+b+c+d)(a'+b'+c'+d')} - \frac 14 \\
&\quad - \frac{ab'+ac'+ad'+ba'+bc'+bd'+ca'+cb'+cd'+da'+db'+dc'}{(a+b+c+d)(a'+b'+c'+d')}\\
&= \frac 34 - \frac{ab'+ac'+ad'+ba'+bc'+bd'+ca'+cb'+cd'+da'+db'+dc'}{(a+b+c+d)(a'+b'+c'+d')}.
\end{align*}
The inequality \eqref{ineq:claim} implies that
$$ab'+ac'+ad'+ba'+bc'+bd'+ca'+cb'+cd'+da'+db'+dc' \ge 1.$$
Two horospheres based at $\mathbf z$ and $\mathbf z'$ are tangent if and only if 
\[
ab'+ac'+ad'+ba'+bc'+bd'+ca'+cb'+cd'+da'+db'+dc' = 1.
\]
In Figure~\ref{fig:S2III}, we give diagrams of horospheres based at eight rational points on $\mathbf S^2_\III$ and corresponding points on $\mathbb C$: 
\begin{align*}
&\left(\tfrac11,\tfrac01,\tfrac01,\tfrac01\right)  = \Phi( 0), & &
\left(\tfrac01,\tfrac11,\tfrac01,\tfrac01\right) = \Phi( 1), & &
\left(\tfrac01,\tfrac01,\tfrac11,\tfrac01\right)  = \Phi( 1+\omega), \\
&\left(\tfrac01,\tfrac01,\tfrac01,\tfrac11\right)  = \Phi( \infty), & &
\left(-\tfrac 12, \tfrac 12, \tfrac 12, \tfrac 12\right)  = \Phi( 2+\omega), & & 
\left(\tfrac 12, -\tfrac 12, \tfrac 12, \tfrac 12\right)  = \Phi( \omega), \\
&\left(\tfrac 12, \tfrac 12, -\tfrac 12, \tfrac 12\right)  = \Phi( -\omega), & &
\left(\tfrac 12, \tfrac 12, \tfrac 12, -\tfrac 12\right)  = \Phi\left(\tfrac 1{1-\omega}\right) .
\end{align*}
Note that the height of $\Phi^{-1} \left( \frac a{a+b+c+d}, \frac b{a+b+c+d}, \frac c{a+b+c+d}, \frac d{a+b+c+d} \right)$ is $a+b+c$.

\section*{Acknowledgements}

The authors wish to thank the anonymous referee for the careful reading and helpful suggestions.
The research was supported by the National Research Foundation of Korea (NRF-2018R1A2B6001624).

\begin{bibdiv}
\begin{biblist}

\bib{Aig13}{book}{
author={Aigner, Martin},
title={Markov's theorem and 100 years of the uniqueness conjecture},
note={A mathematical journey from irrational numbers to perfect
matchings},
publisher={Springer, Cham},
date={2013},
pages={x+257},
isbn={978-3-319-00887-5},
isbn={978-3-319-00888-2},
review={\MR{3098784}},
doi={10.1007/978-3-319-00888-2},
}

\bib{Beardon}{book}{
author={Beardon, Alan F.},
title={The geometry of discrete groups},
series={Graduate Texts in Mathematics},
volume={91},
publisher={Springer-Verlag, New York},
date={1983},
pages={xii+337},
isbn={0-387-90788-2},
review={\MR{698777}},
doi={10.1007/978-1-4612-1146-4},
}

\bib{Bom07}{article}{
author={Bombieri, Enrico},
title={Continued fractions and the Markoff tree},
journal={Expo. Math.},
volume={25},
date={2007},
number={3},
pages={187--213},
issn={0723-0869},
review={\MR{2345177}},
doi={10.1016/j.exmath.2006.10.002},
}


\bib{CK23}{article}{
author={Cha, Byungchul},
author={Kim, Dong Han},
title={Intrinsic Diophantine approximation of a unit circle and its Lagrange spectrum},
journal={Ann. Inst. Fourier (Grenoble)},
volume={73},
date={2023},
pages={101--161},
}





\bib{CF89}{book}{
author={Cusick, Thomas W.},
author={Flahive, Mary E.},
title={The Markoff and Lagrange spectra},
series={Mathematical Surveys and Monographs},
volume={30},
publisher={American Mathematical Society, Providence, RI},
date={1989},
pages={x+97},
isbn={0-8218-1531-8},
review={\MR{1010419}},
doi={10.1090/surv/030},
}



\bib{FKMS}{article}{
author={Fishman, Lior},
author={Kleinbock, Dmitry},
author={Merrill, Keith},
author={Simmons, David},
title={Intrinsic Diophantine approximation on manifolds: general theory},
journal={Trans. Amer. Math. Soc.},
volume={370},
date={2018},
number={1},
pages={577--599},
issn={0002-9947},
review={\MR{3717990}},
doi={10.1090/tran/6971},
}

\bib{KS}{article}{
author={Kim, Dong Han},
author={Sim, Deokwon},
title={The Markoff and Lagrange spectra on the Hecke group H4},
date={2022},
eprint={arXiv:2206.05441 [math.NT]},
}

\bib{KM15}{article}{
author={Kleinbock, Dmitry},
author={Merrill, Keith},
title={Rational approximation on spheres},
journal={Israel J. Math.},
volume={209},
date={2015},
number={1},
pages={293--322},
issn={0021-2172},
review={\MR{3430242}},
doi={10.1007/s11856-015-1219-z},
}


\bib{Kop80}{article}{
author={Kopetzky, Hans G\"{u}nther},
title={Rationale Approximationen am Einheitskreis},
language={German, with English summary},
journal={Monatsh. Math.},
volume={89},
date={1980},
number={4},
pages={293--300},
issn={0026-9255},
review={\MR{587047}},
doi={10.1007/BF01659493},
}

\bib{Kop85}{article}{
author={Kopetzky, Hans G\"{u}nther},
title={\"{U}ber das Approximationsspektrum des Einheitskreises},
language={German, with English summary},
journal={Monatsh. Math.},
volume={100},
date={1985},
number={3},
pages={211--213},
issn={0026-9255},
review={\MR{812612}},
doi={10.1007/BF01299268},
}


\bib{Mar79}{article}{
author={Markoff, A.},
title={Sur les formes quadratiques binaires ind\'efinies},
language={French},
journal={Math. Ann.},
volume={15},
date={1879},
pages={381--409},
issn={0025-5831},
}

\bib{Mar80}{article}{
author={Markoff, A.},
title={Sur les formes quadratiques binaires ind\'efinies. II},
language={French},
journal={Math. Ann.},
volume={17},
date={1880},
number={3},
pages={379--399},
issn={0025-5831},
review={\MR {1510073}},
doi={10.1007/BF01446234},
}

\bib{Mos16}{article}{
author={Moshchevitin, Nikolay},
title={\"Uber die rationalen Punkte auf der Sph\"are},
language={German, with German summary},
journal={Monatsh. Math.},
volume={179},
date={2016},
number={1},
pages={105--112},
issn={0026-9255},
review={\MR{3439274}},
doi={10.1007/s00605-015-0818-4},
}

\bib{Sch75a}{article}{
author={Schmidt, Asmus L.},
title={Diophantine approximation of complex numbers},
journal={Acta Math.},
volume={134},
date={1975},
pages={1--85},
issn={0001-5962},
review={\MR {0422168}},
doi={10.1007/BF02392098},
}

\bib{Sch75b}{article}{
   author={Schmidt, Asmus L.},
   title={On $C$-minimal forms},
   journal={Math. Ann.},
   volume={215},
   date={1975},
   pages={203--214},
   issn={0025-5831},
   review={\MR{376530}},
   doi={10.1007/BF01343890},
}

\bib{Sch76}{article}{
author={Schmidt, Asmus L.},
title={Minimum of quadratic forms with respect to Fuchsian groups. I},
journal={J. Reine Angew. Math.},
volume={286/287},
date={1976},
pages={341--368},
issn={0075-4102},
review={\MR {0457358}},
doi={10.1515/crll.1976.286-287.341},
}

\bib{Sch83}{article}{
   author={Schmidt, Asmus L.},
   title={Diophantine approximation in the Eisensteinian field},
   journal={J. Number Theory},
   volume={16},
   date={1983},
   number={2},
   pages={169--204},
   issn={0022-314X},
   review={\MR{698164}},
   doi={10.1016/0022-314X(83)90040-9},
}

\bib{Sch11}{article}{
author={Schmidt, Asmus L.},
title={Diophantine approximation in the field $\mathbb Q(i\sqrt 2)$},
journal={J. Number Theory},
volume={131},
date={2011},
number={10},
pages={1983--2012},
issn={0022-314X},
review={\MR {2811562}},
doi={10.1016/j.jnt.2011.04.002},
}


\end{biblist}
\end{bibdiv} 

\end{document}